%amc edited 25 mar 2010 brauerBa7.tex

\documentclass[12pt]{article}
\usepackage{amssymb, amscd, amsthm, amsmath,latexsym, amstext,mathrsfs,verbatim}
\usepackage[all]{xy}
\usepackage{graphicx,enumerate, url}
\def\lijntje{\vrule height2.4pt depth-2pt width0.5in}

\def\vlijntje{\vrule height0.45in depth0.4pt width0.4pt}
\def\vlijn{\buildrel {\hbox to 0pt{\hss$\textstyle\circ$\hss}}\over\vlijntje}

\def\dlijntje{{\vrule height2pt depth-1.6pt
width0.5in}\llap{\vrule height4pt depth-3.6pt width0.5in}}

\def\vtriple#1\over#2\over#3{\mathrel{\mathop{\kern0pt #2}\limits_{\hbox
to 0pt{\hss$#1$\hss}}^{\hbox to 0pt{\hss$#3$\hss}}}}
\def\rvtriple#1\over#2\over#3{\mathrel{\mathop{\kern0pt #2}\limits_{\hbox
to 0pt{\hss$#3$\hss}}^{\hbox to 0pt{\hss$#1$\hss}}}}
\def\Ct{\vtriple{\scriptstyle 2}\over\circ\over{}
\kern-1pt\lijntje\kern-1pt\vtriple{\scriptstyle 1}\over\circ\over{}
\kern-4pt{\dlijntje \kern -25pt<}\kern8pt
\vtriple{\scriptstyle 0}\over\circ\over{}\kern-1pt
}
\def\Bt{\vtriple{\scriptstyle 2}\over\circ\over{}
\kern-1pt\lijntje\kern-1pt\vtriple{\scriptstyle 1}\over\circ\over{}
\kern-4pt{\dlijntje \kern -25pt>}\kern8pt
\vtriple{\scriptstyle 0}\over\circ\over{}\kern-1pt}

\def\alg{{A}}

\def\ddA{{\rm A}}
\def\ddD{{\rm D}}

\def\Br{{\rm Br}}
\def\SBr{{\rm SBr}}

\def\BrM{{\rm BrM}}

\def\ddB{{\rm B}}
\def\ddC{{\rm C}}
\def\ddD{{\rm D}}
\def\ddE{{\rm E}}
\def\ddF{{\rm F}}

\def\hE{{\hat E}}

 %isomorphic to

\newcommand{\cA}{\mathcal{A}}

\newcommand{\R}{\mathbb R}
\newcommand{\Z}{\mathbb Z}

\newcommand{\fp}{\mathfrak{p}}

\numberwithin{equation}{section}

\newtheorem{lemma}{Lemma}[section]
\newtheorem{cor}[lemma]{Corollary}
\newtheorem{prop}[lemma]{Proposition}
\newtheorem{thm}[lemma]{Theorem}

\theoremstyle{definition}

\newtheorem{defn}[lemma]{Definition}

\theoremstyle{remark}
\newtheorem{rem}[lemma]{Remark}

\topmargin -0.3in \headsep 0.3in
\textheight 8.5in

\begin{document}
\title{Brauer algebras of type $\ddF_4$}
\author{Shoumin Liu}
\date{}
\maketitle
%\mainmatter
%\setcounter{section}{-1} \tableofcontents

\begin{abstract}
We  present an algebra  related to the Coxeter group
of type $\ddF_4$ which can be  viewed as the Brauer algebra of type $\ddF_4$ and is obtained as a subalgebra of the
Brauer algebra of type $\ddE_6$. We also describe some properties
of this algebra.
\end{abstract}

\begin{section}{Introduction}
When studying tensor decompositions for orthogonal groups,
Brauer  (\cite{Brauer1937}) introduce algebras which we now call   Brauer algebras of type $\ddA$.
 Cohen, Frenk and Wales (\cite{CFW2008}) extended  the
definition to simply laced types, including type $\ddE_6$.
Tits (\cite{T1959}) described how to obtain the  Coxeter group of type $\ddF_4$ as the fixed subgroup of  the  Coxeter group of  type $\ddE_6$
under a diagram automorphism (also seen in \cite{Car}). For this, M\"uhlherr gave a more general way by admissible partitions to obtain Coxeter groups as  subgroups in
 Coxeter groups in \cite{Muehl92}.
Here we will apply a similar  method   to the Brauer algebra $\Br(\ddE_6)$.
This is a part of a project to define Brauer algebras of spherical types (\cite{CLY2010}, \cite{CLY2011}). It turns out
that the presentation by generators and relations obtainable from the Dynkin diagram of type $\ddF_4$ in the same way as was done  for type $\ddB_n$ (\cite{CLY2011}) and $\ddC_n$ (\cite{CLY2010}).
%leads to an algebra isomorphism to the above mentioned subalgebra of $\Br(\ddE_6)$.
\\First we give the definition
of $\Br(\ddF_4)$ using a presentation. Let $\delta$ be the generator of  the infinite cyclic group.
\begin{defn}\label{0.1}
The Brauer algebra of type $\ddF_4$, denoted by $\Br(\ddF_4)$,
is a unital associative $\Z[\delta^{\pm 1}]$-algebra generated by $\{r_i,\, e_i\}_{i=1}^{4}$,  subject to
the following relations.
\begin{eqnarray}
%\begin{spacing}{0.9}
%\addtolength{\textheight}{-20pt}
%\linespread{0.3}
%\setlength{\parskip}{1pt}
%\setlength{\itemsep}{1pt}
%\setlength{\tabcolsep}{1pt}
%\setlength{\baselineskip}{1pt}
%\delta\delta^{-1}&=&1\label{0.1.1}
%\\
%\delta x&=&x\delta  \qquad \qquad \mbox{for each generator  } x \label{0.1.2}
%\delta\delta^{-1}&=&1\label{0.1.1}
%\\
%\delta x&=&x\delta  \ \mbox{for each generator  } x \label{0.1.2}
%\\
r_{i}^{2}&=&1 \qquad \qquad\,\,\,\kern.02em \mbox{for}\,\mbox{any} \ i   \label{0.1.3}
\\
r_ie_i &= & e_ir_i \,=\, e_i \,\,\,\,\,\,\kern.05em \mbox{for}\,\mbox{any}\ i  \label{0.1.4}
\\
e_{i}^{2}&=&\delta e_{i} \qquad \quad\,\,\kern.02em \mbox{for}\ i>2     \label{0.1.5}
\\
e_{i}^{2}&=&\delta^2 e_{i} \qquad \quad\,\,\kern.02em \mbox{for}\ i<3              \label{0.1.6}
\\
r_ir_j&=&r_jr_i, \qquad \quad \mbox{for}\ i\nsim j   \label{0.1.7}
\\
e_ir_j&=&r_je_i, \qquad  \quad \kern-.03em \mbox{for}\ i\nsim j     \label{0.1.8}
\\
e_ie_j&=&e_je_i, \qquad \quad \kern-.06em \mbox{for}\ i\nsim j      \label{0.1.9}
\\
r_ir_jr_i&=&r_jr_ir_j, \qquad\,\kern-.04em \mbox{for}\ {i\sim j}\,    \label{0.1.10}
\\
r_jr_ie_j&=&e_ie_j , \quad \qquad \kern-.11em \mbox{for}\ i\sim j              \label{0.1.11}
\\
r_ie_jr_i&=&r_je_ir_j, \quad \quad \kern.06em \mbox{for}\ i\sim j        \label{0.1.12}
\end{eqnarray}
and for  $\vtriple{\scriptstyle 2}\over\circ\over{}
\kern-4pt{\dlijntje \kern -25pt<}\kern8pt
\vtriple{\scriptstyle 3}\over\circ\over{}\kern-1pt$ ,
\begin{eqnarray}
r_2r_3r_2r_3&=&r_3r_2r_3r_2                                       \label{0.1.13}
 \\
r_2r_3e_2&=&r_3e_2                                \label{0.1.14}
\\
  r_2e_3r_2e_3&=&e_3e_2e_3                                                        \label{0.1.15}
\\
(r_2r_3r_2)e_3&=&e_3(r_2r_3r_2)                                                            \label{0.1.16}
\\
e_2r_3e_2&=&\delta e_2                                                \label{0.1.17}
\\
e_2e_3e_2&=&\delta e_2                                                    \label{0.1.18}
\\
e_2r_3 r_2&=&e_2r_3                                                          \label{0.1.19}
\\
 e_2e_3r_2&=&e_2e_3                                                         \label{0.1.20}
\end{eqnarray}
Here $i\sim j$ means that $i$ and $j$ are connected by a simple bond and $i\nsim j$ means that
there is no bond (simple or multiple) between $i$ and $j$ in the Dynkin Diagram of type $\ddF_4$ depicted in the Figure \ref{E6F4D}.
 The submonoid
of the multiplicative monoid of $\Br(\ddF_4)$
generated by $\delta$, $\{r_i,\,e_i\}_{i=1}^{4}$ is
denoted by $\BrM(\ddF_4)$. This is the monoid of monomials in
$\Br(\ddF_4)$.
\end{defn}
The defining relations (\ref{0.1.13})--(\ref{0.1.20}) can be found in $\Br(\ddC_2)$ in \cite{CLY2010} and
$\Br(\ddB_2)$ in \cite{CLY2011} by renumbering indices. Note that these relations are not symmetric for $2$ and $3$.
Their relations are fully determined by the  Dynkin diagram in the sense that all relations depend only on the vertices
and bonds of the Dynkin diagram and the lengths of their roots.   \\
It is well known that the Coxeter group $W(\ddF_4)$ of type $\ddF_4$, can be obtained  as the subgroup from
the Coxeter group  $W(\ddE_6)$ of type $\ddE_6$, of elements invariant  under the automorphism
of $W(\ddE_6)$ determined by the diagram automorphism $\sigma=(1,6)(3,5)$ indicated as a permutation on
the generators of $W(\ddE_6)$ whose Dynkin diagram are labeled and presented in  Figure \ref{E6F4D}.
\begin{figure}[!htb]
\begin{center}
\includegraphics[width=.7\textwidth,height=.4\textheight]{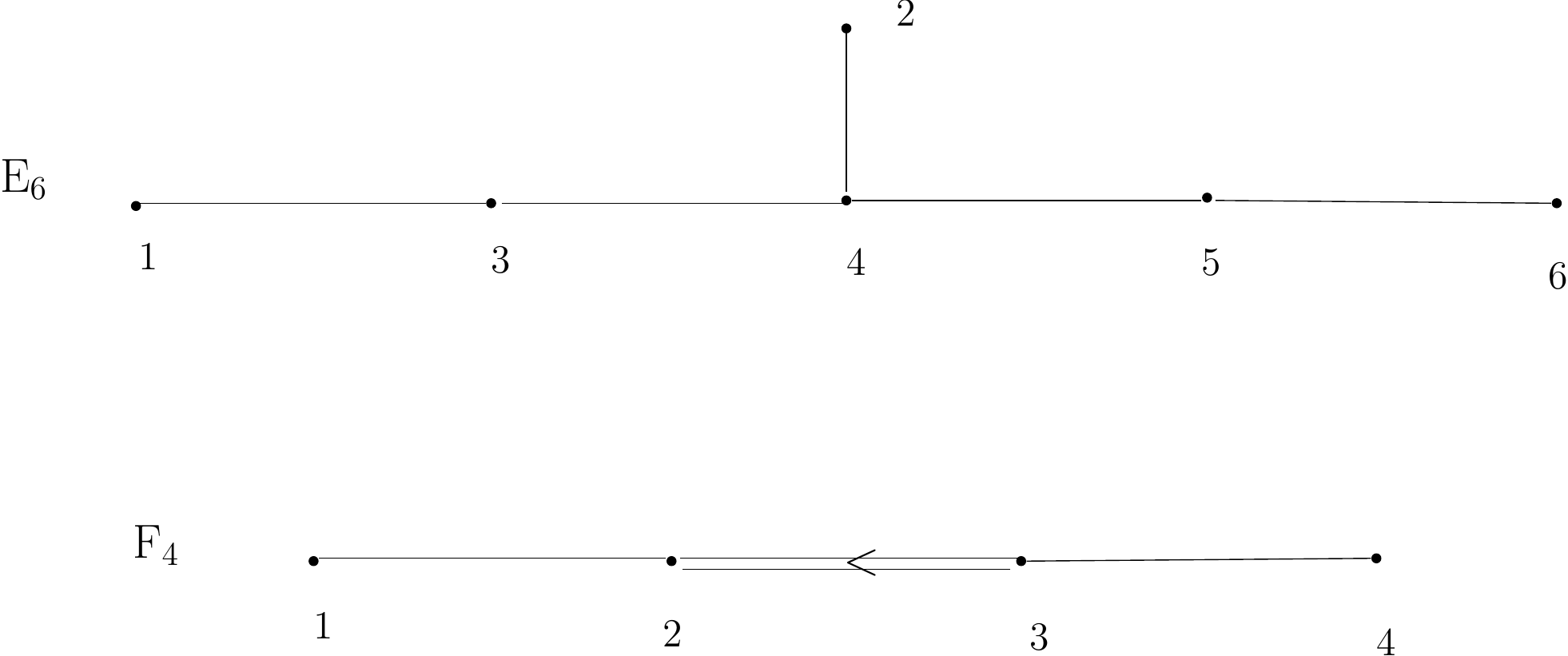}
\end{center}
\caption{Dynkin diagrams of $\ddE_6$ and $\ddF_4$}
\label{E6F4D}
\end{figure}

The action $\sigma$ can be extended to an automorphism
 of the Brauer algebra of type $\ddE_6$ by acting on the Temperley-Lieb generators $E_i$ (\cite{TL1971}) by the same permutation as for
 Weyl group generators.
We denote by $\SBr(\ddE_6)$  the subalgebra generated by  $\sigma$-invariant elements
in $\BrM(\ddE_6)$.
The main theorem of this paper is the following. In order to avoid confusion
with the above generators, the generators of $\Br(\ddE_6)$ have been capitalized.

\begin{thm} \label{mainthm}
There is an  algebra isomorphism
$$\phi:\, \Br(\ddF_4)\longrightarrow \SBr(\ddE_6)$$
determined by $\phi(r_1)=R_1R_6$, $\phi(r_2)=R_3R_5$, $\phi(r_3)=R_4$, $\phi(r_4)=R_2$,
and $\phi(e_1)=E_1E_6$, $\phi(e_2)=E_3E_5$, $\phi(e_3)=E_4$, $\phi(e_4)=E_2$.
Furthermore, the algebra $\Br(\ddF_4)$  is free over $\Z[\delta^{\pm 1}]$ of rank
$14985$.
\end{thm}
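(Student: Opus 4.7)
The plan is to construct $\phi$ as an algebra homomorphism, verify it is surjective, and then use rank-counting to conclude it is an isomorphism, simultaneously yielding freeness and the stated rank.

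First I would show $\phi$ extends to a well-defined algebra map by checking that the images of the generators satisfy relations (\ref{0.1.3})--(\ref{0.1.20}). The single-index and non-bonded relations follow immediately from the corresponding relations in $\Br(\ddE_6)$, since $\{1,6\}$ and $\{3,5\}$ are $\sigma$-orbits of pairwise non-adjacent vertices and so their generators commute among themselves. The quadratic relations split naturally: for $i\in\{3,4\}$ the image is a single Temperley--Lieb generator, so $\phi(e_i)^2=\delta\phi(e_i)$; for $i\in\{1,2\}$ the image is a product of two commuting Temperley--Lieb generators, which accounts for the factor $\delta^2$ in (\ref{0.1.6}). The delicate cases are the double-bond relations (\ref{0.1.13})--(\ref{0.1.20}): here $(r_2,r_3)$ maps to $(R_3R_5,R_4)$ with $\{3,4\}$ and $\{4,5\}$ edges but $\{3,5\}$ a non-edge of $\ddE_6$, and each identity must be checked by explicit expansion using the $\ddE_6$ relations. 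Each image is manifestly $\sigma$-invariant, so $\phi$ factors through $\SBr(\ddE_6)$.

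Next I would establish surjectivity of $\phi\colon\Br(\ddF_4)\to\SBr(\ddE_6)$. By construction $\SBr(\ddE_6)$ is generated as an algebra by the $\sigma$-invariant monomials of $\BrM(\ddE_6)$, so it is enough to realise every such monomial as a $\Z[\delta^{\pm 1}]$-combination of images of elements of $\BrM(\ddF_4)$. This can be done by induction on monomial length, using the $\ddE_6$ relations to push occurrences of $X_i$ and $X_{\sigma(i)}$ (for $X\in\{R,E\}$) next to each other so that they may be bundled into the images $\phi(r_j)$ and $\phi(e_j)$.

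The main content, and the principal obstacle, lies in the rank computation. Following the strategy used for $\Br(\ddB_n)$ in \cite{CLY2011} and $\Br(\ddC_n)$ in \cite{CLY2010}, I would construct a $\Z[\delta^{\pm 1}]$-spanning set $\cB\subseteq\BrM(\ddF_4)$ of cardinality at most $14985$ in an explicit normal form, and then show by iterated application of the relations of Definition \ref{0.1} that every monomial in $\BrM(\ddF_4)$ rewrites as a linear combination of elements of $\cB$. The normal form is guided by M\"uhlherr's admissible partition description of $W(\ddF_4)$ inside $W(\ddE_6)$ together with the root-system combinatorics underlying Brauer monomials in simply laced type. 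The hardest step is proving that the rewriting procedure terminates and produces only elements of $\cB$; this demands a detailed case analysis of how the double-bond relations interact with the bundled products $R_iR_{\sigma(i)}$ and $E_iE_{\sigma(i)}$ inherited from $\ddE_6$.

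Finally, I would exhibit $14985$ $\sigma$-invariant monomials of $\BrM(\ddE_6)$ that are $\Z[\delta^{\pm 1}]$-linearly independent in $\Br(\ddE_6)$, for instance by intersecting a known diagrammatic or root-subset basis of $\Br(\ddE_6)$ from \cite{CFW2008} with the $\sigma$-fixed locus and carrying out the enumeration. Combined with $|\cB|\leq 14985$ and the surjectivity of $\phi$, this forces $\phi(\cB)$ to be a basis of $\SBr(\ddE_6)$; hence $\phi$ is an isomorphism and $\Br(\ddF_4)$ is free of rank $14985$ over $\Z[\delta^{\pm 1}]$.
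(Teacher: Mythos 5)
Your overall architecture coincides with the paper's: verify the defining relations to obtain a homomorphism (Proposition~\ref{phibeinghomo}), produce a normal form giving a spanning set of $\Br(\ddF_4)$ of cardinality $14985$ (Theorem~\ref{rewrittenforms} and Corollary~\ref{upperrank}), count the $\sigma$-invariant monomials inside the known basis of $\Br(\ddE_6)$ to see the target is free of the same rank (Corollary~\ref{lm:sigmainvariant} and Corollary~\ref{lowerrank}), and let surjectivity plus the rank comparison force an isomorphism together with freeness. Your description of the relation checks (including the reduction of the double-bond relations to the $\Br(\ddC_2)\to\Br(\ddA_3)$ computation of \cite{CLY2010}) and of both rank bounds matches what the paper actually does.

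The genuine gap is your surjectivity step. You propose an induction on monomial length in which occurrences of $X_i$ and $X_{\sigma(i)}$ (for $X\in\{R,E\}$) are pushed adjacent and bundled into $\phi(r_j)$, $\phi(e_j)$. This cannot work as stated: $\sigma$-invariance is a property of the monoid \emph{element}, not of a chosen word, so a $\sigma$-invariant monomial need not admit any expression in which the paired generators occur adjacently, and proper subwords of a word representing a $\sigma$-invariant element are in general not $\sigma$-invariant, so the inductive hypothesis has nothing to apply to. Already for the stratum $Y=\emptyset$ the assertion that $W(\ddE_6)^{\sigma}$ is generated by the four bundled reflections is the nontrivial theorem of Tits \cite{T1959} and M\"uhlherr \cite{Muehl92}, not a length induction. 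The paper replaces rewriting by a counting argument: Lemma~\ref{surjective} stratifies the $\sigma$-invariant monomials by the $W(\ddE_6)$-orbit of $a\emptyset$ for $Y\in\{\emptyset,\{2\},\{1,6\},\{2,3,5\}\}$, observes that the $\phi$-images of the normal forms of Theorem~\ref{rewrittenforms} are pairwise distinct $\sigma$-invariant normal forms landing in the corresponding stratum, and checks that the cardinalities agree stratum by stratum (e.g.\ $12^2\cdot 48$ for $Y=\{2\}$ and $36^2+3^2+2\cdot 36\cdot 3=39^2$ for $Y=\{2,3,5\}$), which upgrades containment to equality. You would need to substitute such a matching argument (or an independent proof that the fixed submonoid is generated by the bundled generators) for your induction before the final rank comparison can be invoked.
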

\end{section}
The proof of the theorem is finished in Section \ref{sect:image}.
Moreover, in the last section we will prove that the algebra $\Br(\ddF_4)\otimes R$ for a field $R$ is cellularly stratified.
This paper is included as chapter $4$ in the author's PhD thesis (\cite{L2012}). 

\section{Basic properties of $\Br(\ddF_4)$}
By the properties of $\Br(\ddB_3)$ in \cite{CLY2011} or $\Br(\ddC_3)$ in \cite{CLY2010},
we have more relations between $\{r_2,\, r_3,\, e_2, \, e_3\}$ Such as those of \cite[Lemma 4.1]{CLY2010}.
\begin{comment}
 \begin{lemma}In $\Br(\ddF_4)$, the following holds.

\begin{eqnarray}
e_3e_2e_3&=&e_3r_2e_3  \label{4.1.1}
\\
r_2e_3 e_2&=&e_3e_2     \label{4.1.2}
\\
e_2r_3r_2e_3&=&e_2e_3       \label{4.1.3}
\\
r_3r_2e_3r_2&=&r_2e_3r_2r_3      \label{4.1.4}
\\
e_3r_2e_3r_2&=&e_3e_2e_3                \label{4.1.5}
\end{eqnarray}
 \end{lemma}
 \end{comment}
Just as \cite[Remark 3.5]{CLY2010} for type $\ddC$, there is  an anti-involution on $\Br(\ddF_4)$.

\begin{prop} \label{prop:opp} There is a unique  anti-involution on $\Br(\ddF_4)$ that fixes the generators $r_i$, $e_i$ $(1\leq i\leq 4)$.
\end{prop}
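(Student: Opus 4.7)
The plan is to lift the desired map to the free algebra and invoke the universal property of the presentation. Let $F$ be the free associative unital $\Z[\delta^{\pm 1}]$-algebra on the eight symbols $\{r_i,e_i\}_{i=1}^{4}$, and let $\psi_0\colon F\to F$ be the unique anti-homomorphism fixing each generator, so that $\psi_0(x_1 x_2\cdots x_n)=x_n\cdots x_2 x_1$ on words in the generators. To produce the desired anti-involution, it suffices to verify that for each defining relation $\ell=r$ of $\Br(\ddF_4)$, the identity $\psi_0(\ell)=\psi_0(r)$ already holds in $\Br(\ddF_4)$; then $\psi_0$ descends to an anti-homomorphism $\psi\colon \Br(\ddF_4)\to\Br(\ddF_4)$ with $\psi(r_i)=r_i$ and $\psi(e_i)=e_i$ for each $i$.

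Inspection of Definition \ref{0.1} shows that the relations (\ref{0.1.3})--(\ref{0.1.10}), (\ref{0.1.12}), (\ref{0.1.13}), and (\ref{0.1.16})--(\ref{0.1.18}) are palindromic and therefore already invariant under $\psi_0$. Moreover, (\ref{0.1.14}) and (\ref{0.1.19}) are mutual reverses of one another, so this pair validates itself. The remaining three cases reduce to establishing
$$ e_j r_i r_j = e_j e_i \quad (i\sim j), \qquad e_3 r_2 e_3 r_2 = e_3 e_2 e_3, \qquad r_2 e_3 e_2 = e_3 e_2,$$
which are the word-reverses of (\ref{0.1.11}), (\ref{0.1.15}), and (\ref{0.1.20}) respectively.

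The principal, though routine, obstacle is to derive these three identities from the defining relations. I would do this by passing to rank-$2$ sub-algebras of $\Br(\ddF_4)$: the sub-algebra generated by $\{r_i,r_j,e_i,e_j\}$ for a simply-laced bond $i\sim j$ is a homomorphic image of the classical Brauer algebra $\Br(\ddA_2)$ (with parameter $\delta^2$ if $\{i,j\}\subset\{1,2\}$ and $\delta$ otherwise), while the sub-algebra generated by $\{r_2,r_3,e_2,e_3\}$ is a homomorphic image of $\Br(\ddC_2)$. Both $\Br(\ddA_2)$ and $\Br(\ddC_2)$ are already known to admit an anti-involution fixing their generators (see \cite[Remark 3.5]{CLY2010}); applying that anti-involution to (\ref{0.1.11}), (\ref{0.1.15}), and (\ref{0.1.20}) inside the relevant sub-algebra yields precisely the three displayed identities, which therefore hold in $\Br(\ddF_4)$ as well.

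Once $\psi_0$ descends to $\psi$, the composition $\psi^2$ is an algebra homomorphism fixing every generator, hence equal to the identity on $\Br(\ddF_4)$; so $\psi$ is an anti-involution. Uniqueness is immediate: any anti-involution fixing the generators must send a monomial $x_1\cdots x_n$ to $x_n\cdots x_1$, and the monomials span $\Br(\ddF_4)$ over $\Z[\delta^{\pm 1}]$, so $\psi$ is determined uniquely.
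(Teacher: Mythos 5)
Your argument is correct and is essentially the one the paper has in mind: the paper offers no written proof of this proposition, only an appeal to the analogue \cite[Remark 3.5]{CLY2010} for type $\ddC$, and your verification --- each defining relation is either self-reverse, paired with its own reverse ((\ref{0.1.14}) with (\ref{0.1.19})), or has a reverse ($e_jr_ir_j=e_je_i$, $e_3r_2e_3r_2=e_3e_2e_3$, $r_2e_3e_2=e_3e_2$) that holds because the relevant rank-two subalgebras are homomorphic images of $\Br(\ddA_2)$ and $\Br(\ddC_2)$, where these identities are known --- is exactly the content being invoked. No changes needed.
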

\begin{defn}\label{1.1}
Let $Q$ be a graph. The Brauer monoid $\BrM(Q)$ is the monoid
generated by the symbols $R_i$ and $E_i$, for  each node $i$ of $Q$ and $\delta$,
$\delta^{-1}$ subject to the  relation (\ref{0.1.3})--(\ref{0.1.5}) and (\ref{0.1.7})--(\ref{0.1.12}).
The Brauer algebra $\Br(Q)$ is   the free $\Z[\delta^{\pm 1}]$-algebra for Brauer monoid  $\BrM(Q)$.
\end{defn}
\begin{comment}
 \begin{rem} From \cite{CFW2008},
we know that the following relations
in $\Br(Q)$ hold for $i\sim j \sim k$.
\begin{eqnarray}
E_iR_jR_i&=&E_iE_j \label{3.1.1}
\\
R_jE_iE_j &=& R_i E_j  \label{3.1.2}
\\
E_iR_jE_i &=& E_i       \label{3.1.3}
\\
E_jE_iR_j &=& E_j R_i     \label{3.1.4}
\\
E_iE_jE_i &=& E_i      \label{3.1.5}
\\
E_jE_iR_k E_j &=& E_jR_iE_k E_j      \label{3.1.6}
\\
E_jR_iR_k E_j &=& E_jE_iE_k E_j           \label{3.1.7}
\end{eqnarray}
\end{rem}
\end{comment}
\begin{prop} \label{phibeinghomo}
The map $\phi$  determined on generators in Theorem \ref{mainthm} induces an algebra homomorphism from
$\Br(\ddF_4)$ to $\Br(\ddE_6)$.
\end{prop}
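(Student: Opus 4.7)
The plan is to verify, for each defining relation (\ref{0.1.3})--(\ref{0.1.20}) of $\Br(\ddF_4)$, that its image under $\phi$ is a valid identity in $\Br(\ddE_6)$; since $\Br(\ddF_4)$ is presented by those generators and relations, this suffices to extend $\phi$ uniquely to an algebra homomorphism. Throughout I use the labeling of $\ddE_6$ in Figure \ref{E6F4D}, under which the bonds are $1$--$3$, $3$--$4$, $4$--$5$, $5$--$6$ and $2$--$4$, and the $\sigma$-orbits are $\{1,6\}$, $\{3,5\}$, $\{4\}$, $\{2\}$, corresponding respectively to the nodes $1,2,3,4$ of $\ddF_4$.

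The first block of verifications is essentially free. Within each orbit $\{1,6\}$ and $\{3,5\}$ the two nodes are non-adjacent in $\ddE_6$, so the factors of $\phi(r_1)=R_1R_6$ and $\phi(r_2)=R_3R_5$ (and their $E$-versions) commute with each other. This immediately gives (\ref{0.1.3})--(\ref{0.1.6}); for instance $\phi(e_1)^2=E_1E_6E_1E_6=E_1^2E_6^2=\delta^2E_1E_6=\delta^2\phi(e_1)$. For (\ref{0.1.7})--(\ref{0.1.9}) one checks directly that the non-adjacent pairs $(1,3)$, $(1,4)$, $(2,4)$ of $\ddF_4$ map to products of monomials whose constituent letters are pairwise non-adjacent in $\ddE_6$, so the required commutations reduce to those in $\Br(\ddE_6)$.

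For the simply-bonded pairs $(1,2)$ and $(3,4)$ of $\ddF_4$, relations (\ref{0.1.10})--(\ref{0.1.12}) require a little more care but split into parallel halves. For the pair $(1,2)$, writing $\phi(r_1r_2r_1)=R_1R_6R_3R_5R_1R_6$ and regrouping with $R_6R_3=R_3R_6$, $R_5R_1=R_1R_5$, $R_6R_1=R_1R_6$, the expression becomes $(R_1R_3R_1)(R_6R_5R_6)$, which by the two $\ddA_2$-braid relations of $\Br(\ddE_6)$ equals $(R_3R_1R_3)(R_5R_6R_5)=\phi(r_2r_1r_2)$ after the reverse regrouping. Relations (\ref{0.1.11}) and (\ref{0.1.12}) for $(1,2)$ split analogously into two commuting copies of the corresponding $\Br(\ddE_6)$ identities along the bonds $1$--$3$ and $5$--$6$. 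For the pair $(3,4)$ of $\ddF_4$ the computation lives in the $\ddA_3$-subchain $3$--$4$--$5$ of $\ddE_6$ and uses that $R_2$ and $E_2$ do not occur; the relations then reduce to their $\Br(\ddA_3)$-analogues combined with $R_3R_5=R_5R_3$.

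The main obstacle is the block (\ref{0.1.13})--(\ref{0.1.20}) of double-bond relations, which involve $\phi(r_2)=R_3R_5$ and $\phi(r_3)=R_4$ (together with the $E$-counterparts) and are supported on the $\ddA_3$-subdiagram $3$--$4$--$5$. Each is verified with three tools: (i) the intra-orbit commutations $R_3R_5=R_5R_3$ and $E_3E_5=E_5E_3$; (ii) the $\ddE_6$ braid relations along the bonds $3$--$4$ and $4$--$5$; and (iii) the $\Br(\ddE_6)$ handle identities $E_iR_jE_i=E_i$, $R_jR_iE_j=E_iE_j$ and $E_iE_jE_i=E_i$ for $i\sim j$. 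For example, (\ref{0.1.13}) becomes $(R_3R_5R_4)^2=(R_4R_3R_5)^2$, and applying $R_3R_5=R_5R_3$ together with the two braid relations brings both sides to $R_5R_4R_5R_3R_4R_5$; while (\ref{0.1.17}) becomes $E_3E_5R_4E_3E_5=E_3(E_5R_4E_5)E_3=E_3E_5E_3=\delta E_3E_5=\delta\phi(e_2)$. The longest checks will be (\ref{0.1.15}) and (\ref{0.1.18}), which follow step-by-step the $\Br(\ddB_2)$ verifications carried out in \cite{CLY2011,CLY2010}, so I would appeal to that parallelism rather than repeat the calculations verbatim.
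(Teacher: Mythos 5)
Your proposal is correct and follows essentially the same route as the paper: verify each defining relation under $\phi$, with the easy blocks reducing to commutations and braid relations inside $\Br(\ddE_6)$, and the double-bond relations (\ref{0.1.13})--(\ref{0.1.20}) reduced to the computations already carried out for the homomorphism $\Br(\ddC_2)\to\Br(\ddA_3)$ in \cite{CLY2010} (and \cite{CLY2011}), exactly as the paper does. One small slip in your description of the simply-bonded pair $(3,4)$ of $\ddF_4$: its image is the adjacent pair $R_4,R_2$ (resp.\ $E_4,E_2$) of $\ddE_6$, so $R_2$ and $E_2$ \emph{do} occur and the relations (\ref{0.1.10})--(\ref{0.1.12}) hold verbatim as defining relations of $\Br(\ddE_6)$ rather than living in the subchain $3$--$4$--$5$; this does not affect the validity of the argument.
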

\begin{proof}It suffices to prove that the relations in Definition \ref{0.1} still holds when the generators are replaced by their images under $\phi$.
 The difficult ones are (\ref{0.1.13})--(\ref{0.1.20});
  which have been proved in the meanwhile of  the homomorphism from $\Br(\ddC_2)$ to $\Br(\ddA_3)$ in \cite{CLY2010}.
\end{proof}
To  distinguish them  from  the generators of $\Br(\ddF_4)$, we denote  the generators of $\Br(\ddC_3)$
($\Ct$) as
in \cite{CLY2010} by $\{r'_i, e'_i\}_{i=0}^{2}$ and  the generators of $\Br(\ddB_3)$ ($\Bt$)
 in \cite{CLY2011} by  $\{r''_i, e''_i\}_{i=0}^{2}$.
By checking their defining relations, we have the following proposition.
\begin{prop}\label{C3B3inF4} There are  injective algebra homomorphisms
$$\phi_1:\quad\Br(\ddC_3)\rightarrow\Br(\ddF_4)$$ 
$$\phi_2:\quad\Br(\ddB_3)\rightarrow\Br(\ddF_4),$$ 
 defined on generators as follows.
\begin{eqnarray*}
\phi_1(r'_i)&=&r_{3-i},\,\phi_1(e'_i)=e_{3-i}, \, \quad\mbox{for} \quad\, 0\leq i\leq 2,  \\
\phi_2(r''_i)&=&r_{2+i},\,\phi_2(e''_i)=e_{2+i}, \quad` \mbox{for} \quad \, 0\leq i\leq 2.
\end{eqnarray*}
\end{prop}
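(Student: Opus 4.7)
The plan is to prove Proposition \ref{C3B3inF4} in two stages. First I would verify that $\phi_1$ and $\phi_2$ respect the defining relations, so that they are well-defined algebra homomorphisms. Then I would deduce injectivity by composing each with the homomorphism $\phi:\Br(\ddF_4)\to\Br(\ddE_6)$ of Proposition \ref{phibeinghomo} and identifying the composition with a known injective fold embedding into $\Br(\ddE_6)$.

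For the first stage, I would go through the defining relations of $\Br(\ddC_3)$ (from \cite{CLY2010}) and $\Br(\ddB_3)$ (from \cite{CLY2011}) and substitute the images. The relations fall into four groups: (a) single-node relations $(r_i')^2=1$, $r_i'e_i'=e_i'$, and the quadratic $(e_i')^2=\delta^{?} e_i'$, for which one checks the long/short-root correlation---e.g.\ under $\phi_1$ the short-root node $0$ of $\ddC_3$ maps to node $3$ of $\ddF_4$, where $e_3^2=\delta e_3$ by (\ref{0.1.5}), while the long-root node $1$ of $\ddC_3$ maps to node $2$ of $\ddF_4$, where $e_2^2=\delta^2 e_2$ by (\ref{0.1.6}); (b) commutation relations between non-adjacent nodes, which translate since the image pairs remain non-adjacent in the $\ddF_4$-diagram; (c) the braid and Temperley-Lieb relations across simple bonds, which go over directly; and (d) the multi-bond relations at the doubly-bonded edge, which under $\phi_1$ (respectively $\phi_2$) become precisely relations (\ref{0.1.13})--(\ref{0.1.20}) among $r_3,r_2,e_3,e_2$ (respectively $r_2,r_3,e_2,e_3$) in $\Br(\ddF_4)$, as advertised in the remark following Definition \ref{0.1}.

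For the second stage, composing with $\phi$ yields $\phi\circ\phi_1(r_0')=R_4$, $\phi\circ\phi_1(r_1')=R_3R_5$, $\phi\circ\phi_1(r_2')=R_1R_6$, and analogously for the $e_i'$. All of these lie in the parabolic subalgebra of $\Br(\ddE_6)$ of type $\ddA_5$ supported on the nodes $\{1,3,4,5,6\}$, and they are precisely the elements fixed by the restriction of $\sigma=(1,6)(3,5)$ to this subdiagram. Thus $\phi\circ\phi_1$ factors as the fold embedding $\Br(\ddC_3)\hookrightarrow\Br(\ddA_5)$ of \cite{CLY2010} followed by the parabolic inclusion $\Br(\ddA_5)\hookrightarrow\Br(\ddE_6)$ of \cite{CFW2008}; both are injective, so $\phi_1$ must be injective. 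The analogous argument for $\phi_2$ lands the image in the parabolic $\Br(\ddD_4)$ on nodes $\{2,3,4,5\}$ with central node $4$; here the restricted involution swaps the two leaves $3$ and $5$, and $\phi\circ\phi_2$ becomes the $\ddB_3$-fold embedding $\Br(\ddB_3)\hookrightarrow\Br(\ddD_4)$ of \cite{CLY2011} followed by the parabolic inclusion, whence $\phi_2$ is injective.

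The main obstacle I anticipate is keeping the index bookkeeping straight across the three labelings (of $\ddC_3$/$\ddB_3$, of $\ddF_4$, and of the subdiagrams of $\ddE_6$), and checking that the fold involutions inherited from $\sigma$ on the $\ddA_5$- and $\ddD_4$-subdiagrams coincide with the ones used to construct $\Br(\ddC_3)$ and $\Br(\ddB_3)$ as fixed subalgebras in \cite{CLY2010} and \cite{CLY2011}. Once the conventions are aligned, the result follows without further computation.
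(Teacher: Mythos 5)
Your proposal follows essentially the same route as the paper: verify the defining relations directly to get well-definedness, then compose with $\phi$ and identify $\phi\circ\phi_1$ (resp.\ $\phi\circ\phi_2$) with the known injective embedding of $\Br(\ddC_3)$ into the $\ddA_5$-subalgebra on nodes $\{1,3,4,5,6\}$ (resp.\ of $\Br(\ddB_3)$ into the $\ddD_4$-subalgebra) of $\Br(\ddE_6)$, whence injectivity. The only cosmetic difference is that the paper cites \cite{CW2011} rather than \cite{CFW2008} for the parabolic-subalgebra identification.
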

\begin{proof}
Just checking the defining relations of two algebras, we find that  $\phi_1$ and $\phi_2$ are algebra morphisms.
We see that $\phi\phi_1(\Br(\ddC_3))$ is contained in the subalgebra of $\Br(\ddE_6)$ generated
by $\{R_1,\,E_1\}\cup\{R_i,\,E_i\}_{i=3}^{6}$, which is isomorphic to $\Br(\ddA_5)$ according to \cite{CW2011}. By the main theorem of
\cite{CLY2010} and the behavior of $\phi\phi_1$ on generators, the homomorphism $\phi\phi_1$ coincides with the embedding of $\Br(\ddC_3)$ in $\Br(\ddA_5)$.
  Similarly, with \cite{CW2011} and the main theorem of \cite{CLY2011}, the homomorphism
$\phi\phi_2$ coincides with the embedding of $\Br(\ddB_3)$ in $\Br(\ddD_4)$; by application of these two theorems, it follows that the homomorphisms $\phi_1$ and $\phi_2$ are injective.
\end{proof}

\section{ Admissible root sets}
In this section, we  give the definition and description for admissible sets of type $\ddF_4$
and study some of their basic properties.\\
Let $\{\beta_i\}_{i=1}^4$ be simple roots of $W(\ddF_4)$.
They can be realized in $\R^4$ as
\begin{eqnarray*}
\beta_1&=&\frac{\epsilon_1-\epsilon_2-\epsilon_3-\epsilon_4}{2}, \, \beta_2=\epsilon_2,\\
\beta_3&=&\epsilon_3-\epsilon_2,\, \beta_4=\epsilon_4-\epsilon_3,
\end{eqnarray*}
with $\{\epsilon_i\}_{i=1}^{4}$ being the standard orthonormal basis of $\R^4$.
The set
$$\Psi^+=\{\frac{\epsilon_1\pm \epsilon_2\pm\epsilon_3\pm\epsilon_4}{2} \}
\cup\{\epsilon_i\}_{i=1}^{4}\cup \{\epsilon_j\pm\epsilon_i\}_{1\leq i<j\leq 4}$$
of cardinality $24$
is the  set of   positive roots of a root system $\Psi$ of  $W(\ddF_4)$ having $\beta_1$, $\ldots$, $\beta_4$ as simple roots.
We call a vector  $\beta\in \Psi^+$ a short root if its Euclidean length is $1$,
a long root if  its  Euclidean length is $\sqrt{2}$.\\
Let   $\{\alpha_i\}_{i=1}^6$ be simple roots of $W(\ddE_6)$.
The  $\{\alpha_i\}_{i=1}^6$  span a linear space over $\R$  of dimension $6$.
We define a linear map $\fp:\R^6\rightarrow \R^4$ by specifying its images on the given basis : \\
$\fp(\alpha_1)=\beta_1$,  $\fp(\alpha_6)=\beta_1$, $\fp(\alpha_3)=\beta_2$,
$\fp(\alpha_5)=\beta_2$,  $\fp(\alpha_4)=\beta_3$,  $\fp(\alpha_2)=\beta_4$.\\
Now  $\fp(\R^6)$ is  the $\sigma$-invariant space of $\R^6$, where $\sigma$ is the linear transformation of $\R^6$ determined by:\\
 $\sigma(\alpha_1)=\alpha_6$,  $\sigma(\alpha_6)=\alpha_1$, $\sigma(\alpha_3)=\alpha_5$,
$\sigma(\alpha_5)=\alpha_3$,  $\sigma(\alpha_4)=\alpha_4$,  $\sigma(\alpha_2)=\alpha_2$.\\
Let $\Phi\subset \R^6$ be the root system of $\ddE_6$ with simple roots $\{\alpha_i\}_{i=1}^{6}$, and
$\Phi^+$  the positive roots of $\Phi$.
In \cite{CFW2008}, the admissible root sets for simply-laced type are given. Now we will define the admissible
for type $\ddF_4$.
\begin{defn} Let $X\subset\Psi^+$ be a set of  mutually orthogonal roots. The set $X$ is called \it{admissible}
if $\fp^{-1}(X)\cap \Phi^+$ is  an \it{admissible} set.
\end{defn}

\begin{prop}\label{prop: admset}
There is a one-to-one correspondence between $\sigma$-invariant admissible root sets of type $\ddE_6$ and
the admissible root sets of type $\ddF_4$.
 Collections of all  admissible sets can be partitioned into  six $W(\ddF_4)$-orbits given by the following representatives.
\begin{enumerate}[(I)]
\item $\emptyset$,
\item  $\{\epsilon_4-\epsilon_3\}$,
\item  $\{\frac{\epsilon_1-\epsilon_2-\epsilon_3-\epsilon_4}{2}\}$,
\item  $\{\epsilon_3-\epsilon_2,\epsilon_3+\epsilon_2\}$,
\item $\{\frac{\epsilon_1-\epsilon_2-\epsilon_3-\epsilon_4}{2}, \epsilon_3-\epsilon_2, \epsilon_4+\epsilon_1\}$,
\item  $\{\epsilon_3-\epsilon_2,\epsilon_3+\epsilon_2, \epsilon_4+\epsilon_1, \epsilon_4-\epsilon_1\}$.
\end{enumerate}
Furthermore, their cardinalities are respectively, $1$, $12$, $12$, $18$, $36$, $3$.
\end{prop}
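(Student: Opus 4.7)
The plan is to derive the classification from the Cohen--Frenk--Wales description of $W(\ddE_6)$-orbits of admissible root sets in $\Phi^+$, applied through the folding map $\fp$. The bijection asserted in the first sentence is essentially built into the definition: an admissible $\ddF_4$ set $X$ pulls back to the $\sigma$-invariant admissible $\ddE_6$ set $Y=\fp^{-1}(X)\cap\Phi^+$, and, conversely, if $Y\subset\Phi^+$ is admissible and $\sigma$-invariant then $\fp(Y)\subset\Psi^+$ is a set of mutually orthogonal roots, since $\fp$ is (up to rescaling) the orthogonal projection onto the $\sigma$-fixed subspace and $Y$ is $\sigma$-stable. The two constructions are inverse to one another, and because $W(\ddF_4)=W(\ddE_6)^\sigma$ by Tits, they descend to a bijection between $W(\ddF_4)$-orbits on admissible $\ddF_4$ sets and $W(\ddE_6)$-orbits of $\sigma$-invariant admissible $\ddE_6$ sets (with each $W(\ddE_6)$-orbit possibly contributing several $W(\ddF_4)$-orbits on its $\sigma$-fixed locus).

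Next I would traverse the CFW list of $W(\ddE_6)$-orbit representatives, keep only those orbits containing a $\sigma$-invariant member, and for each such orbit enumerate the $\sigma$-invariant admissible sets, regrouping them into $W(\ddF_4)$-orbits. Each resulting orbit is then matched with one of the candidate representatives (I)--(VI) by applying $\fp$ to a chosen $\ddE_6$ representative and comparing with the given coordinate expressions; mutual orthogonality of each representative is checked directly against the standard basis $\{\epsilon_i\}_{i=1}^4$.

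For the cardinalities I would combine direct counting with an orbit-stabilizer argument inside $W(\ddF_4)$, of order $1152$. Types (II) and (III) are the orbits of a single positive long, resp.\ short, root, each of size $12$ because $\Psi^+$ contains $12$ long and $12$ short roots. Type (IV) counts unordered pairs of orthogonal positive long roots: the six pairs $\{\epsilon_j-\epsilon_i,\,\epsilon_j+\epsilon_i\}$ for $1\le i<j\le 4$ together with the twelve pairs supported on disjoint index sets give $6+12=18$. Type (VI) counts maximal mutually orthogonal sets of positive long roots, which correspond bijectively to the three partitions of $\{1,2,3,4\}$ into two pairs, giving $3$. The count $36$ for (V) follows either from a direct enumeration of short-long-long orthogonal triples of the given shape, or from computing the stabilizer of the representative in $W(\ddF_4)$ and applying orbit-stabilizer.

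The main obstacle will be the organized analysis of the $\sigma$-fixed locus inside each $W(\ddE_6)$-orbit and the verification that the six $W(\ddF_4)$-orbits produced exhaust all admissible $\ddF_4$ sets; this is precisely where M\"uhlherr's admissible-partition description of the embedding $W(\ddF_4)\hookrightarrow W(\ddE_6)$ enters. As a consistency check the total count $1+12+12+18+36+3=82$ can be compared against the number of $\sigma$-invariant admissible $\ddE_6$ sets obtained from CFW.
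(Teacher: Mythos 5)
Your proposal follows essentially the same route as the paper: both reduce to the Cohen--Frenk--Wales classification of the four $W(\ddE_6)$-orbits of admissible sets, pass to the $\sigma$-fixed locus via the folding map $\fp$, and decompose it into $W(\ddF_4)$-orbits, with your explicit root-counting in $\R^4$ (and the consistency check $1+12+12+18+36+3=1+12+30+39$) merely filling in details the paper leaves as ``easily seen'' or to a GAP computation. No substantive difference or gap.
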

\begin{proof} In the admissible root sets of type $\ddE_6$, there are four $W(\ddE_6)$-orbits, which are
the orbits of
$\emptyset$, $\{\alpha_2\}$, $\{\alpha_3, \,\alpha_5\}$, $\{\alpha_2,\,\alpha_3,\,\alpha_5,\, 2\alpha_4+\alpha_2+\alpha_3+\alpha_5\}$ of
respective sizes.
 The $W(\ddE_6)$-orbits are easily seen to possess $1$ ((I)), $12$ ((II)), $30$ ((III), (IV)),
 $39$ ((V), (VI)) $\sigma$-invariant admissible root sets which can be decomposed into $W(\ddF_4)$-orbits  with representatives  (I), (II), (III), (IV),(V), (VI) as listed in the
 proposition.
\end{proof}

As in \cite{CFW2008}, \cite{CLY2010}, and \cite{CLY2011}, we have the following lemmas.
\begin{lemma}\label{lm:SimpleRootRels}
Let $i$ and $j$ be nodes of the Dynkin diagram $\ddF_4$.
If $w\in W(\ddF_4)$ satisfies $w\beta_i=\beta_j$, then
$we_iw^{-1}=e_j$.
\end{lemma}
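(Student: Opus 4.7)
The plan is induction on the Coxeter length $\ell(w)$ in $W(\ddF_4)$. The base case $\ell(w)=0$ forces $w=1$ and $i=j$, which is immediate. For the inductive step, I pick a left descent $r_k$ of $w$ and write $w=r_kw'$ with $\ell(w')=\ell(w)-1$, so that $w'\beta_i = r_k\beta_j$. In the favorable case $k\ne j$ and $k\nsim j$, we have $r_k\beta_j = \beta_j$ and the induction hypothesis yields $w'e_i(w')^{-1} = e_j$; then $we_iw^{-1} = r_ke_jr_k = e_j$ by relation (\ref{0.1.8}).

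When no such left descent exists, I argue through explicit rewriting using the defining relations. The essential move is the transfer relation (\ref{0.1.12}), $r_ie_jr_i = r_je_ir_j$ for $i\sim j$, which enables replacing conjugation of $e_i$ by $r_j$ with conjugation of $e_j$ by $r_i$. Combined with absorption $r_ie_ir_i = e_i$ from (\ref{0.1.4}) and the commutations in (\ref{0.1.8})--(\ref{0.1.9}), this handles the key length-two base case: taking $w = r_ir_j$ with $i\sim j$ (so $\{i,j\}=\{1,2\}$ or $\{3,4\}$, since the cross-orbit pair $\{2,3\}$ is precluded by the orbit structure on simple roots), one checks $w\beta_i = \beta_j$ and computes $we_iw^{-1} = r_ir_je_ir_jr_i = r_i(r_ie_jr_i)r_i = e_j$. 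For $w$ whose reduced expressions traverse the double bond, the analogous rewriting is supplied by relations (\ref{0.1.13})--(\ref{0.1.20}).

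The principal obstacle is the isotropy case $j = i$ with $w\in Z_{W(\ddF_4)}(\beta_i)$. For every simple root $\beta_i$, this stabilizer has order $48$, substantially larger than the parabolic subgroup $\langle r_k : k\ne i,\, k\nsim i\rangle$ (of order at most $6$) generated by simple reflections fixing $\beta_i$, so it contains elements that cannot be realized as products of these simple fixings alone. To cover the extra elements, I will invoke the homomorphisms $\phi_1: \Br(\ddC_3)\to\Br(\ddF_4)$ and $\phi_2: \Br(\ddB_3)\to\Br(\ddF_4)$ from Proposition \ref{C3B3inF4} together with the analogues of this lemma for $\Br(\ddC_3)$ in \cite{CLY2010} and for $\Br(\ddB_3)$ in \cite{CLY2011}. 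These provide the needed conjugation identities inside the two rank-$3$ parabolic subalgebras of $\Br(\ddF_4)$, and combined with the inductive transfer above they suffice to exhaust all remaining cases.
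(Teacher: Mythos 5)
The paper does not actually prove this lemma --- it is stated with the remark ``As in \cite{CFW2008}, \cite{CLY2010}, and \cite{CLY2011}'' --- so your argument has to stand on its own, and as written it does not close. The induction works only in the favorable branch: when every left descent $r_k$ of $w$ has $k=j$ or $k\sim j$, the element $w'=r_kw$ sends $\beta_i$ to $-\beta_j$ or to the non-simple root $r_k\beta_j$, and the induction hypothesis no longer applies. You acknowledge this, but the only rewriting you actually exhibit is for the single length-two word $w=r_ir_j$; no argument is given that an arbitrary unfavorable $w$ reduces to that template, and the claim that relations (\ref{0.1.13})--(\ref{0.1.20}) ``supply the analogous rewriting'' across the double bond is never substantiated. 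A concrete instance where the scheme stalls: $w=r_{\epsilon_1+\epsilon_2}$ fixes $\beta_1$ (so $i=j=1$), its unique left descent is $r_2$ with $2\sim 1$, and $r_2w\beta_1=\beta_1+\beta_2$ is not simple, so neither the favorable branch nor your worked example applies.

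The same element defeats the fallback you propose for the stabilizer case. Since $\epsilon_1+\epsilon_2=2\beta_1+4\beta_2+2\beta_3+\beta_4$ lies in the linear span of neither $\{\beta_1,\beta_2,\beta_3\}$ nor $\{\beta_2,\beta_3,\beta_4\}$, the reflection $r_{\epsilon_1+\epsilon_2}$ belongs to neither standard rank-$3$ parabolic, so the embeddings $\phi_1$, $\phi_2$ of Proposition \ref{C3B3inF4} do not reach it, and it is not a product of simple reflections commuting with $r_1$. What is actually required --- and what the paper supplies elsewhere through the explicit decompositions and order counts of Lemmas \ref{0to5} and \ref{NAC}, following \cite[Section 6]{CLY2010} and \cite[Section 6]{CLY2011} --- is an explicit generating set for each coset $\{w:w\beta_i=\beta_j\}$ (one representative times the order-$48$ stabilizer, generated by reflections in roots orthogonal to $\beta_i$), a verification of the conjugation identity for each generator, and a counting argument showing these generators exhaust the group. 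Your outline has the right ingredients (length induction, the transfer relation (\ref{0.1.12}), the rank-$3$ embeddings), but this bookkeeping is the substance of the proof and is missing.
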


Consider a positive root $\beta$ and a node $i$ of type $\ddF_4$.

If there
exists $w\in W$ such that $w\beta_i=\beta$, then  due to Lemma \ref{lm:SimpleRootRels} we can define the element
$e_{\beta}$ in $\BrM(\ddF_4)$ by
$$e_{\beta}=we_iw^{-1}.$$
 In general,
$$we_{\beta}w^{-1}=e_{w\beta},$$ for $w\in W(\ddF_4)$ and $\beta$ a root
of $W(\ddF_4)$, and  here we just consider the natural action of $W(\ddF_4)$ on $\Psi^+$ by negating negative roots.
 Analogous to the  argument in \cite[Lemma 4.5]{CLY2011}, the following lemma can be obtained by checking case by case listed in Proposition
 \ref{prop: admset}.
\begin{lemma}\label{lem:2roots}
 Let $\gamma_1$, $\gamma_2\in \Psi^+$ and $\gamma_1$ orthogonal to $\gamma_2$ such  that  $\{\gamma_1,\,\gamma_2\}$
 can be a subset of some admissible root set,
then $$e_{\gamma_1}e_{\gamma_2}=e_{\gamma_2}e_{\gamma_1}.$$
\end{lemma}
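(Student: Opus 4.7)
The plan is a case analysis driven by the orbit classification in Proposition~\ref{prop: admset}. The equivariance $we_\gamma w^{-1}=e_{w\gamma}$ reduces the question to checking the commutation $e_{\gamma_1}e_{\gamma_2}=e_{\gamma_2}e_{\gamma_1}$ on a single representative of each $W(\ddF_4)$-orbit of unordered orthogonal pairs inside an admissible set. Orbits (I)--(III) carry admissible sets of cardinality at most one, so only orbits (IV), (V), (VI) require work.

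For orbit (IV), the representative $\{\beta_3,\, \beta_3+2\beta_2\}$ lies in the rank-$3$ parabolic subsystem of type $\ddB_3$ generated by $\beta_2,\beta_3,\beta_4$, and the commutation is transported via the injective homomorphism $\phi_2:\Br(\ddB_3)\hookrightarrow \Br(\ddF_4)$ of Proposition~\ref{C3B3inF4} from the corresponding identity in $\Br(\ddB_3)$ established in \cite{CLY2011}. For orbit (V), the pair $\{\beta_1,\beta_3\}$ is handled directly by relation~(\ref{0.1.9}), since nodes $1$ and $3$ are non-adjacent in the $\ddF_4$ Dynkin diagram. The remaining pairs in orbits (V) and (VI), including the ``cross'' pairs such as $\{\epsilon_3-\epsilon_2,\,\epsilon_1+\epsilon_4\}$ that join the two $\ddB_2$-blocks of the admissible set in (VI), are reduced by conjugation with a suitable short-root reflection: for instance, reflection in the short root $\alpha=\tfrac{1}{2}(\epsilon_1-\epsilon_2-\epsilon_3+\epsilon_4)$ fixes $\epsilon_3-\epsilon_2$ and sends $\epsilon_1+\epsilon_4$ to $\epsilon_2+\epsilon_3$, so the pair falls back into orbit (IV) where it is already known.

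The main obstacle is orbit (VI): its admissible representative is not contained in any proper rank-$3$ parabolic subsystem of $\ddF_4$, which prevents a uniform appeal to $\phi_1$ or $\phi_2$. The resolution is to exhibit an explicit $W(\ddF_4)$-conjugator for each orbit of cross pairs, as above, reducing them all either to orbit (IV) or to a pair of non-adjacent simple roots. This finite case-check runs parallel to the argument of \cite[Lemma~4.5]{CLY2011}; once the conjugators are in hand the desired commutation is immediate.
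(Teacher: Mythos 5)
Your plan is correct and follows essentially the same route as the paper, which disposes of the lemma in one line by invoking a case-by-case check over the orbit representatives of Proposition~\ref{prop: admset}, analogous to \cite[Lemma~4.5]{CLY2011}. Your version actually supplies more detail than the paper does --- the explicit short-root reflections conjugating the ``cross'' pairs of (V) and (VI) back to the orbit-(IV) configuration check out (e.g.\ $s_\alpha$ with $\alpha=\tfrac12(\epsilon_1-\epsilon_2-\epsilon_3+\epsilon_4)$ does fix $\epsilon_3-\epsilon_2$ and send $\epsilon_1+\epsilon_4$ to $\epsilon_2+\epsilon_3$) --- so no further comparison is needed.
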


If $X\subset \Psi^+$ is a subset of  some admissible root set,  then by the lemma we can define
\begin{eqnarray}\label{e}
e_{X}=\Pi_{\beta\in X}e_{\beta}.
\end{eqnarray}
\begin{defn}\label{defn:admclo}
 Suppose that $X\subset \Psi^+$ is a mutually orthogonal root set.
 If $X$ can be contained in some admissible root set, Then the  minimal admissible set containing $X$
 is called the $admissible$ $closure$ of $X$, denoted by $X^{\rm cl}$.
\end{defn}
Thanks to an argument similar to \cite[Lemma 4.7]{CLY2011}, the following lemma holds.
\begin{lemma} Let $X\subset \Psi^+$ be a  mutually orthogonal root set. If  $X^{\rm cl}$ exists, then
$$e_{X^{\rm cl}}=\delta^{\#(X^{\rm cl}\setminus X)}e_{X}.$$
\end{lemma}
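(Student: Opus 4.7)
The proof is a case analysis over the six $W(\ddF_4)$-orbits of admissible root sets in Proposition \ref{prop: admset}. By Lemma \ref{lm:SimpleRootRels}, the identity to be proved is $W(\ddF_4)$-equivariant in $X$: if $w \in W(\ddF_4)$, then $w e_X w^{-1} = e_{wX}$ and $(wX)^{\rm cl} = w(X^{\rm cl})$. It therefore suffices to fix one orbit representative $X^{\rm cl}$ and verify the identity for a representative $X$ in each $\mathrm{Stab}_{W(\ddF_4)}(X^{\rm cl})$-orbit of mutually orthogonal subsets whose admissible closure equals $X^{\rm cl}$.

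For orbits (I), (II), and (III), the representative has cardinality at most one, so any proper mutually orthogonal subset is either empty or a sub-singleton, each of which is admissible with closure equal to itself rather than the given $X^{\rm cl}$. For orbit (IV), the representative $\{\epsilon_3-\epsilon_2,\ \epsilon_3+\epsilon_2\}$ has size two, but each singleton subset is admissible in orbit (II), so its closure is the singleton itself. Consequently only $X = X^{\rm cl}$ arises in orbits (I)--(IV), and the identity is a tautology in these cases.

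The substantive work lies in orbits (V) and (VI). For orbit (V), whose representative contains one short root and two orthogonal long roots, the only nontrivial $X$ is, up to the stabilizer action, a short-long pair, since the long-long pair is already admissible in orbit (IV) and all singletons are admissible. For orbit (VI), whose four-element representative consists of mutually orthogonal long roots, the nontrivial cases are the size-three subsets, which are all in one stabilizer orbit. In each situation exactly one root is added to form the closure, so the formula asserts $e_{X^{\rm cl}} = \delta\, e_X$.

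The plan for these two remaining identities is to conjugate $X^{\rm cl}$ by a suitable $w \in W(\ddF_4)$ so that it lies inside the image of one of the rank-three embeddings supplied by Proposition \ref{C3B3inF4}, and then to invoke the analogous admissible-closure identity for the smaller Brauer algebra. For orbit (V) the relevant configuration (one short root, two long roots meeting in a $\ddB_3$-pattern) falls inside the image of $\phi_2: \Br(\ddB_3) \hookrightarrow \Br(\ddF_4)$, reducing the identity to the $\ddB_3$ version established in \cite[Lemma 4.7]{CLY2011}. For orbit (VI) the four long roots can be placed inside the image of $\phi_1: \Br(\ddC_3) \hookrightarrow \Br(\ddF_4)$, whence the identity follows from the analogous statement in \cite[Lemma 4.7]{CLY2010}. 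The main technical obstacle is to exhibit the explicit conjugating element $w$ in each case, a finite check on root positions using the realisations of $\Psi^+$ and $\Phi^+$ given in Section 3; once $w$ is fixed, the $\delta$-bookkeeping is automatic from the rank-three identity, and the exponent $\#(X^{\rm cl}\setminus X)$ matches the single application of $e_ie_je_i = \delta e_i$ (or an analogous reduction) triggered by the lone missing root.
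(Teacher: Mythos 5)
Your argument is correct and is essentially the paper's own: the paper disposes of this lemma in a single line by appealing to ``an argument similar to \cite[Lemma 4.7]{CLY2011}'', which is precisely the reduction to the rank-three non-simply-laced cases that you carry out. You supply the details the paper omits---the orbit-by-orbit elimination showing that only the short--long pairs (orbit (V)) and the triples of long roots (orbit (VI)) are nontrivial, and the use of the embeddings of Proposition \ref{C3B3inF4} to import the $\ddB_3$ and $\ddC_3$ closure identities from \cite{CLY2011} and \cite{CLY2010}---so the only step you leave as a plan (exhibiting the conjugating elements) is a routine finite verification.
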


\section{An upper bound for the rank}

We introduce  notation for the following admissible sets of type $\ddF_4$ corresponding to those listed in Proposition \ref{prop: admset}.
\begin{eqnarray*}
X_0&=&\emptyset\\
X_1&=&\{\beta_4\}\\
X_2&=& \{\beta_1\}\\
X_3&=&\{\beta_3,\beta_3+2\beta_2\}\\
X_4&=&\{\beta_1,\beta_3\}^{\rm cl}\\
X_5&=&\{\beta_3, \beta_3+2\beta_2, \beta_3+2\beta_2+2\beta_1\}^{\rm cl}
\end{eqnarray*}

Let
\begin{enumerate}[(I)]
\item $N_0=W(\ddF_4)$, $A_0=\{1\}$, $C_0=N_0$,
\item $N_1=\left<r_1,\, r_2,\, r_{\epsilon_4+\epsilon_3},\, r_4\right>$
$C_1=\left<r_1,\, r_2,\, r_{\epsilon_4+\epsilon_3},\right>$, $A_1=\left<r_4\right>$,
\item $N_2=\left<r_1, \,r_3,\, r_4, r_{(\epsilon_1+\epsilon_2+\epsilon_3-\epsilon_4)/2} \right> $,
$C_2=\left<r_3,\, r_4\right>$,\\
$A_2=\left<r_1, r_{(\epsilon_1+\epsilon_2+\epsilon_3-\epsilon_4)/2},\, r_{(\epsilon_1+\epsilon_2-\epsilon_3+\epsilon_4)/2}, r_{(\epsilon_1-\epsilon_2+\epsilon_3+\epsilon_4)/2}\right>$,
\item $N_3=\left<r_2,\, r_3, \,r_{\epsilon_4},\, r_{\epsilon_4-\epsilon_1}\right>$,
$C_3=\left<r_{\epsilon_4-\epsilon_1}\right>$,
$A_3=\left<r_2,\, r_3,\, r_{\epsilon_4},\, r_{\epsilon_1}\right>$,
\item $N_4=\left<r_3,\, r_1,\,r_{(\epsilon_1+\epsilon_2+\epsilon_3-\epsilon_4)/2},\, r_{(\epsilon_1-\epsilon_2+\epsilon_3+\epsilon_4)/2}\right>$,
     $C_4=\{1\}$, $A_4=N_4$, 
\item  $N_5=\left<r_1, r_2, r_3, r_{\epsilon_4},r_{\epsilon_1}\right>$,
      $C_5=\{1\}$, $A_5=N_5$.
 \end{enumerate}
 The structure of these groups can be determined below.
\begin{lemma}\label{0to5}
 For  $N_i$, $A_i$, $C_i$,  $i=1$, $\ldots$, $5$, the following holds,
\begin{enumerate}[(I)]
\item $N_0\cong W(\ddF_4)$, $A_0\cong\{1\}$, $C_0\cong N_0$,
\item $N_1\cong W(\ddB_3)\times W(\ddA_1)$,
$C_1\cong W(\ddB_3)$, $A_2\cong  W(\ddA_1)$,
\item $N_2\cong W(\ddB_3)\times W(\ddA_1) $,
$C_2\cong W(\ddA_2)$,
$A_2\cong W(\ddA_1)^4$,
\item $N_3\cong W(\ddB_2)^2$,
$C_3\cong W(\ddA_1)$,
$A_3\cong W(\ddB_2)\times W(\ddA_1)^2$,
\item $A_4=N_4\cong W(\ddB_2)\times W(\ddA_1)^2$, $C_4=\{1\}$,
\item  $A_5=N_5\cong W(\ddB_3)\times W(\ddB_2)$, $C_5=\{1\}$.
 \end{enumerate}
\end{lemma}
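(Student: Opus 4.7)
The strategy is a case-by-case verification. Each of the twelve subgroups $N_i$, $A_i$, $C_i$ is a reflection subgroup of $W(\ddF_4)$, hence itself a finite Coxeter group whose isomorphism type is determined by the Coxeter diagram read off from the pairwise relations of its generating reflections. For each case I would express the generating roots in the coordinate realization of $\Psi^+\subset\R^4$ from the previous section and compute the pairwise Euclidean inner products. The braid order $m_{\alpha\beta}$ of $r_\alpha r_\beta$ satisfies $\cos(\pi/m_{\alpha\beta})=|\alpha\cdot\beta|/(\|\alpha\|\|\beta\|)$, so among roots of $\Psi^+$ only $m=2$ (orthogonal), $m=3$ (simple bond), and $m=4$ (double bond, necessarily joining a short and a long root) can occur. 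For direct-product claims I verify that the generating roots split into two subsets with every root in one subset orthogonal to every root in the other.

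The cases $i=0,1,2,3,4$ are routine. For instance, in case $i=1$ the three roots $\beta_1,\beta_2,\epsilon_3+\epsilon_4$ have Coxeter diagram $\beta_2$---$\beta_1$$=$$\epsilon_3+\epsilon_4$ (simple bond, then double bond), giving $C_1\cong W(\ddB_3)$, while $\beta_4$ is orthogonal to all three, giving $N_1\cong W(\ddB_3)\times W(\ddA_1)$. In case $i=2$, the four short roots generating $A_2$ are pairwise orthogonal by direct inner-product check, giving $W(\ddA_1)^4$; inside $N_2$ the root $\beta_1$ is orthogonal to $\beta_3$, $\beta_4$, and $\gamma=(\epsilon_1+\epsilon_2+\epsilon_3-\epsilon_4)/2$, while the latter three form a $\ddB_3$ chain $\beta_3$---$\beta_4$$=$$\gamma$. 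Cases $i=3,4$ follow similarly: the inner-product table exhibits mutually orthogonal components of types $\ddB_2$ and $\ddA_1$ that match the claimed decompositions.

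The case $i=5$ is the main obstacle, since the Coxeter diagram built naively from $r_1,r_2,r_3,r_{\epsilon_1},r_{\epsilon_4}$ is connected (for example, $\beta_1$ has nonzero inner product with each of $\beta_2$, $\epsilon_1$, $\epsilon_4$), so the direct product structure $W(\ddB_3)\times W(\ddB_2)$ is not visible from the given generating set. My approach here is twofold. First, I would pin down $|N_5|$ via orbit-stabilizer: by Proposition \ref{prop: admset} the $W(\ddF_4)$-orbit of $X_5$ has size $3$, and a direct calculation shows that each of the five generators preserves $X_5$ setwise, so $N_5$ is contained in the set-stabilizer of order $|W(\ddF_4)|/3=384$, which matches $|W(\ddB_3)\times W(\ddB_2)|$. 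Second, I would identify $\langle r_1,r_2,r_3\rangle$ as the standard parabolic on the first three simple roots $\beta_1,\beta_2,\beta_3$ of $\ddF_4$, whose Coxeter diagram is visibly $\ddB_3$, and then construct a commuting $W(\ddB_2)$-complement inside $N_5$ by taking suitable products of $r_{\epsilon_1}$ and $r_{\epsilon_4}$ with elements of this parabolic so as to produce two involutions that act trivially on the span of $\beta_1,\beta_2,\beta_3$ and whose product has order $4$. This explicit construction of a commuting complement is where I expect the main computational effort to lie.
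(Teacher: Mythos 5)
For cases $i=0,\ldots,4$ your method --- compute the pairwise inner products of the generating roots, read off the Coxeter diagram, and verify the orthogonal splitting for the direct factors --- is exactly the paper's: the paper writes out only the case $i=1$ (with the same computation $\langle\epsilon_3+\epsilon_4,\beta_1\rangle=-1$, $\langle\epsilon_3+\epsilon_4,\beta_2\rangle=0$ that you give) and leaves the remaining cases to the reader, so up to there you are on the intended route and somewhat more explicit.

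The case $i=5$ is where there is a genuine gap, and you have correctly sensed that the naive diagram reading fails but drawn the wrong conclusion. Your plan to build a commuting $W(\ddB_2)$-complement to the parabolic $\langle r_1,r_2,r_3\rangle\cong W(\ddB_3)$ cannot succeed, because no such complement exists: the five roots $\beta_1$, $\epsilon_2$, $\epsilon_3-\epsilon_2$, $\epsilon_4$, $\epsilon_1$ generate, under mutual reflection, a closed subsystem of $\Psi$ with $16$ positive roots (the four $\epsilon_i$, the eight $\tfrac12(\epsilon_1\pm\epsilon_2\pm\epsilon_3\pm\epsilon_4)$, and $\epsilon_3\pm\epsilon_2$, $\epsilon_4\pm\epsilon_1$; for instance $\epsilon_1-\epsilon_4$ arises as $r_{\epsilon_2}r_1r_{\epsilon_2}(\epsilon_3-\epsilon_2)$), and in the orthonormal basis $f_1=(\epsilon_3+\epsilon_2)/\sqrt2$, $f_2=(\epsilon_3-\epsilon_2)/\sqrt2$, $f_3=(\epsilon_4+\epsilon_1)/\sqrt2$, $f_4=(\epsilon_4-\epsilon_1)/\sqrt2$ this subsystem is visibly of type $\ddC_4$. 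Hence $N_5=W(\ddC_4)\cong W(\ddB_4)$, which has order $384$ but center of order $2$ and $32$ elements of order $3$, whereas $W(\ddB_3)\times W(\ddB_2)$ has center of order $4$ and only $8$ elements of order $3$; the two groups are not isomorphic, so item (VI) as printed cannot be proved by any construction. (Note also that your orbit--stabilizer step only yields $\#N_5\le 384$; it is the $\ddC_4$ subsystem that supplies the matching lower bound.) This does not affect the rest of the paper, since Lemma \ref{NAC} and the rank count in Corollary \ref{upperrank} use only $\#N_5=384$, which is correct either way --- but your write-up should replace the complement construction by the identification $N_5\cong W(\ddB_4)$ and flag the discrepancy with the stated isomorphism type.
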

\begin{proof}We do not give the full proof but restrict to  the case $i=1$. It can be checked  that
$\left<\epsilon_3+\epsilon_4, \beta_1\right>=-1$ and $\left<\epsilon_3+\epsilon_4, \beta_2\right>=0$, and
$\{\epsilon_3+\epsilon_4, \beta_1, \beta_2\}$ are linearly independent, hence $C_1\cong W(\ddB_3)$.
Since each  element of $\{\epsilon_3+\epsilon_4, \beta_1, \beta_2\}$ is orthogonal to $\beta_4$, so we
get that $N_1\cong W(\ddB_3)\times W(\ddA_1)$.
\end{proof}
 When we consider these groups in $\BrM(\ddF_4)$, the following lemma can be obtained.
 \begin{lemma}\label{NAC}
 For $i=0$, $\ldots$, $5$, the following holds.
 \begin{enumerate}[(I)]
 \item The group $N_i$ is the normalizer of $X_i$ in $W(\ddF_4)$.
 \item The group $N_i$ is the semidirect  product of $A_i$ and  $C_i$, with $C_i$ normalized by $A_i$.
 \item For $x\in A_i$,  we have $xe_{X_i}=e_{X_i}$.
 \item For $x\in C_i$, we have  $xe_{X_i}=e_{X_i}x $.
 \end{enumerate}
 \end{lemma}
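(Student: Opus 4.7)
My plan is to prove the four parts simultaneously by case analysis over $i = 0, 1, \ldots, 5$, following the template of \cite[Lemma~4.5]{CLY2010} and \cite[Lemma~4.9]{CLY2011}. Throughout, I rely on the explicit descriptions of $X_i$, $A_i$, $C_i$, $N_i$ and on the defining relations of $\Br(\ddF_4)$ together with their consequences recorded in the (commented-out) lemma of Section~2.

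For part (I), I would check by direct computation in the realization of $\Psi$ in $\R^4$ that each generating reflection of $N_i$ preserves $X_i$ as a set of root lines. Exhaustion of the normalizer then follows from comparing $|N_i|$ (read off from Lemma~\ref{0to5}) with $|W(\ddF_4)|$ divided by the $W(\ddF_4)$-orbit size of $X_i$ from Proposition~\ref{prop: admset}; since $|W(\ddF_4)| = 1152$, the orbit sizes $1, 12, 12, 18, 36, 3$ produce indices $1152, 96, 96, 64, 32, 384$ which agree with $|N_i|$ on the nose, forcing $N_i$ to be the whole normalizer. For part (II), the semidirect decomposition $N_i = A_i \ltimes C_i$ is verified by inspecting the generating sets: $A_i \cap C_i = \{1\}$ by linear independence of the relevant reflection hyperplanes, $|A_i|\cdot|C_i| = |N_i|$ matches the orders, and the conjugation action of $A_i$ on $C_i$ is checked on generators. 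Part (IV) is the most direct: each generator $r_\gamma$ of $C_i$ is a reflection in a root $\gamma \in \Psi^+$ orthogonal to every root of $X_i$, so conjugating relation (\ref{0.1.8}) via Lemma~\ref{lm:SimpleRootRels} yields $r_\gamma e_\beta = e_\beta r_\gamma$ for each $\beta \in X_i$, whence $r_\gamma e_{X_i} = e_{X_i} r_\gamma$.

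Part (III) is the main obstacle. For the generators of $A_i$ of the form $r_\beta$ with $\beta \in X_i$, the conjugate of relation (\ref{0.1.4}) gives $r_\beta e_\beta = e_\beta$, and combined with commutation with the remaining factors $e_\gamma$ for $\gamma \in X_i \setminus \{\beta\}$ this yields $r_\beta e_{X_i} = e_{X_i}$. The remaining generators of $A_i$---those that permute $X_i$ non-trivially, or are reflections in roots of the admissible closure $X_i^{\rm cl}\setminus X_i$---require a case-by-case derivation using the non-simply-laced relations (\ref{0.1.13})--(\ref{0.1.20}) and derived identities such as $e_3 e_2 e_3 = e_3 r_2 e_3$ from the commented-out lemma. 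As a representative example, in case (IV) the generator $r_2$ swaps $\beta_3$ and $\beta_3+2\beta_2$; writing $e_{X_3} = e_3 \cdot r_2 e_3 r_2$ via $e_{\beta_3+2\beta_2} = r_2 e_3 r_2$, the absorption $r_2 e_{X_3} = e_{X_3}$ reduces, after one application of (\ref{0.1.15}), to exactly $e_3 e_2 e_3 = e_3 r_2 e_3$. In the closure case, $r_\gamma e_{X_i^{\rm cl}} = e_{X_i^{\rm cl}}$ by the previous paragraph applied to $X_i^{\rm cl}$, and the identity $e_{X_i^{\rm cl}} = \delta^{\#(X_i^{\rm cl}\setminus X_i)} e_{X_i}$ from the last lemma of Section~3 lets us divide by the invertible scalar to conclude. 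The derived identities one needs are available in $\Br(\ddF_4)$ either by direct manipulation of the defining relations or by transport through the embeddings $\Br(\ddC_3), \Br(\ddB_3) \hookrightarrow \Br(\ddF_4)$ of Proposition~\ref{C3B3inF4}, where the analogous statements have already been established in \cite{CLY2010} and \cite{CLY2011}. The bookkeeping required to match each generator of $A_i$ with the appropriate absorption identity is what makes this part delicate.
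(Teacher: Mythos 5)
Your overall strategy is the same as the paper's. Part (I) is proved there exactly as you propose: $N_i\leq N(X_i)$ is clear, and equality follows by comparing $\#N_i$ from Lemma \ref{0to5} with $\#W(\ddF_4)/\#(W(\ddF_4)X_i)$ using the orbit sizes of Proposition \ref{prop: admset}. For parts (II)--(IV) the paper gives no computation at all and simply defers to the arguments of Section 6 of \cite{CLY2010} and \cite{CLY2011}; your generator-by-generator verification is precisely the content being deferred, and your treatment of (II) and (III) (including the absorption $r_2e_{X_3}=e_{X_3}$ via (\ref{0.1.15}) and $e_3e_2e_3=e_3r_2e_3$, and the reduction of closure generators via $e_{X^{\rm cl}}=\delta^{\#(X^{\rm cl}\setminus X)}e_X$) is sound.

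There is, however, one step in your argument for part (IV) that fails as written. You assert that for every generator $r_\gamma$ of $C_i$ and every $\beta\in X_i$, orthogonality of $\gamma$ and $\beta$ lets you obtain $r_\gamma e_\beta=e_\beta r_\gamma$ by conjugating relation (\ref{0.1.8}) via Lemma \ref{lm:SimpleRootRels}. That conjugation requires the pair $\{\gamma,\beta\}$ to be $W(\ddF_4)$-conjugate to a pair of simple roots $\{\beta_j,\beta_i\}$ with $i\nsim j$; but the non-adjacent pairs of the $\ddF_4$ diagram are $\{1,3\}$, $\{1,4\}$, $\{2,4\}$, each consisting of one short and one long simple root, so an orthogonal pair of two \emph{long} roots is never conjugate to such a pair. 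This occurs for the generator $r_{\epsilon_3+\epsilon_4}$ of $C_1$ against $\beta_4=\epsilon_4-\epsilon_3$, and for the generator $r_{\epsilon_4-\epsilon_1}$ of $C_3$ against both roots $\epsilon_3\pm\epsilon_2$ of $X_3$. For the first of these the correct source is relation (\ref{0.1.16}): since $r_2r_3r_2=r_{\epsilon_3+\epsilon_2}$, that relation reads $r_{\epsilon_3+\epsilon_2}e_{\epsilon_3-\epsilon_2}=e_{\epsilon_3-\epsilon_2}r_{\epsilon_3+\epsilon_2}$, and conjugating by an element sending $(\epsilon_3-\epsilon_2,\epsilon_3+\epsilon_2)$ to $(\epsilon_4-\epsilon_3,\epsilon_4+\epsilon_3)$ gives what is needed; for the disjoint long--long pairs in the $C_3$ case one needs a further derived commutation (transported, e.g., from the type $\ddB_3$/$\ddC_3$ subalgebras of Proposition \ref{C3B3inF4} or established as in \cite[Section 6]{CLY2010}, \cite[Section 6]{CLY2011}), not an instance of (\ref{0.1.8}). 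With these repairs the proof goes through, but as stated your justification of (IV) covers only the short--long pairs.
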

 \begin{proof} Clearly, $N_i$ normalizes $X_i$, so $N_i\leq N(X_i)$ (the normalizer of $X_i$ in $W(\ddF_4)$), and the equality follows from  Lagrange's Theorem by verification in the table below.
Here  $\#N_i$ is known from
 Lemma \ref{0to5}, and the lengths of $W(\ddF_4)$-orbits are given in Proposition \ref{prop: admset}.
 Therefore the first claim hold.
 The proof of   the left conclusions is  as  arguments in   \cite[Section 6]{CLY2010} and \cite[Section 6]{CLY2011}.
 \end{proof}
 Suppose  $D_i$ is a set of left coset representatives of $N_i$ in $W(\ddF_4)$. We have the table below.
 In the table the product of the three entries in each row is equal to $1152$.
 \begin{center}
 \begin{tabular}{|c|c|c|c|}
                \hline
                % after \\: \hline or \cline{col1-col2} \cline{col3-col4} ...
                $i$&$\#D_i$ & $\#C_i$&$\#A_i$\\
                \hline
                0 & 1 & 1152 &1\\
                1 & 12 & 48 &2\\
                2 & 12 & 6 & 16 \\
                3 & 18 & 2 &32\\
                4 & 36 & 1 &32\\
                5 & 3 & 1 &384 \\
                \hline
              \end{tabular}
 \end{center}
 We find that for the Brauer monoid  action of   some  monomial $\phi(a)$ for  $a\in \BrM(\ddF_4)$,
 the  admissible root sets  $\fp(\phi(a)\emptyset)$ and $\fp(\phi(a)^{\rm op}\emptyset)$ belong to  different $W(\ddF_4)$-orbits; for example $\fp(\phi(e_2e_3)\emptyset)=\{\beta_2\}$, and  $\fp(\phi(e_2e_3)^{\rm op}\emptyset)=\{\beta_3, \beta_2+\beta_3\}$.
 Some more groups as follows are needed.
 Let
\begin{eqnarray}
 N_6^L&=&\left<r_2,\, r_3,\, r_{\epsilon_4},\, r_{\epsilon_4-\epsilon_1}\right>,\\
 N_6^R&=&\left<r_2,\, r_{\epsilon_3},\, r_{\epsilon_4},\, r_{\epsilon_4-\epsilon_1}\right>,\\
 C_6&=&\left<r_{\epsilon_4-\epsilon_1}\right>,\\
 N_8&=&\left<r_2,\, r_4,\, r_{\epsilon_1}, r_{\epsilon_3}\right>.
 \end{eqnarray}
 Additionally, we choose $D_6^L$, $D_6^R$, and  $D_8$ to be sets of left coset representatives
  of $N_6^L$, $N_6^R$, and $N_8$ in $W(\ddF_4)$,
 respectively. \\
  Let $N_7^L=N_6^R$, $N_7^R=N_6^L$, $D_7^L=D_6^R$, $D_7^R=D_6^L$, $C_7=C_6$,\\
 $N_9^L=N_5$, $N_9^R=N_4$,  $D_9^L=D_5$, $D_9^R=D_4$,\\
 $N_{10}^L=N_4$, $N_{10}^R=N_5$,  $D_{10}^L=D_4$, $D_{10}^R=D_5$.\\
 In view of  \cite{CLY2010} and \cite{CLY2011}, the following lemma holds.
 \begin{lemma} \label{68}
 For  above groups, we have
 \begin{enumerate}[(I)]
 \item $N_6^L\cong W(\ddB_2)\times W(\ddB_2)$, $N_6^R\cong  W(\ddB_2)\times  W(\ddA_1)^2$,\\
 $C_6\cong W(\ddA_1)$, $N_8\cong W(\ddB_2)\times  W(\ddA_1)^2$.
 \item For each $a\in N_6^L$ $(b\in N_6^R)$, there exists some $c\in C_6$ such that\\
 $a e_3e_2= e_3e_2c $ $ (e_3e_2b= c e_3e_2) $.
 \item For each $a\in N_8$ we have that $a e_4r_3e_2e_3e_4=e_4r_3e_2e_3e_4$.
 \item For each $a\in N_9^L$  and $b\in N_9^R$, we have \\
 $a e_3e_2e_1e_3=  e_3e_2e_1e_3 $ and $ e_3e_2e_1e_3b=  e_3e_2e_1e_3 $.
 \end{enumerate}
 \end{lemma}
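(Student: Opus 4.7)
The plan is to imitate the proofs of Lemmas \ref{0to5} and \ref{NAC}, handling each of the four parts by explicit verification---first at the level of the root system for (I), then at the level of the Brauer monoid for (II)--(IV).

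For (I), I would compute, for each generating reflection of $N_6^L$, $N_6^R$, $C_6$ and $N_8$, the Euclidean length of the underlying root (determining short versus long) together with the pairwise inner products, and then read off the Coxeter presentation. For example, $\beta_2$ is short and $\beta_3$ is long with $\langle\beta_2,\beta_3\rangle=-1$, so $r_2$ and $r_3$ generate a copy of $W(\ddB_2)$ acting on $\mathrm{span}(\epsilon_2,\epsilon_3)$; the roots $\epsilon_4$ and $\epsilon_4-\epsilon_1$ generate a second $W(\ddB_2)$ on the orthogonal subspace $\mathrm{span}(\epsilon_1,\epsilon_4)$, and the two commute because they act on orthogonal subspaces, giving $N_6^L\cong W(\ddB_2)\times W(\ddB_2)$. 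The other three groups are obtained by the same inspection: for $N_6^R$ one finds two orthogonal short reflections $r_{\epsilon_2}$, $r_{\epsilon_3}$ (a $W(\ddA_1)^2$ factor) beside the $W(\ddB_2)$ on $\mathrm{span}(\epsilon_1,\epsilon_4)$, and for $N_8$ one has the $W(\ddB_2)$ generated by $r_4=r_{\epsilon_4-\epsilon_3}$ and $r_{\epsilon_3}$ together with the $W(\ddA_1)^2$ generated by the orthogonal short reflections $r_{\epsilon_1}$, $r_{\epsilon_2}$.

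For (II)--(IV) it suffices to check each identity on a generating set of the relevant Coxeter group. The cases where the acting generator is a simple reflection $r_1,r_2,r_3,r_4$ follow directly from the defining relations in Definition \ref{0.1}, their immediate consequences, and the $\ddC_3$ and $\ddB_3$ relations imported through the embeddings $\phi_1$ and $\phi_2$ of Proposition \ref{C3B3inF4}; for instance, (\ref{0.1.11}) applied to $i\sim j$ gives $r_3e_3e_2=e_3e_2$ and $r_2e_3e_2=e_3e_2$ at once, verifying (II) for the two simple generators of $N_6^L$ with trivial twist. For the non-simple generators such as $r_{\epsilon_4}$ and $r_{\epsilon_4-\epsilon_1}$, I would write each as $w r_iw^{-1}$ for a suitable $w\in W(\ddF_4)$ and a simple node $i$, then apply Lemma \ref{lm:SimpleRootRels} together with the definition $e_\beta=we_iw^{-1}$ to reduce to manipulations of words in the simple generators. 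The twist map in (II), sending a generator $a\in N_6^L$ to the element $c\in C_6$ with $a\,e_3e_2=e_3e_2c$, is then checked to extend to a group homomorphism $N_6^L\to C_6$, and the analogous computation on the right handles the $N_6^R$ statement.

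The main obstacle is the bookkeeping required for the non-simple reflections, where several defining relations must be chained to move a prefix of the form $r_4r_3r_2\cdots$ past $e_3e_2$, $e_4r_3e_2e_3e_4$, or $e_3e_2e_1e_3$. Fortunately, prototypes for all such manipulations have been carried out in \cite{CLY2010} for $\ddC$ inside $\ddA$ and in \cite{CLY2011} for $\ddB$ inside $\ddD$; moreover, by Proposition \ref{phibeinghomo} one may, as a last resort, verify any candidate identity after applying $\phi$ and translate the problem into the simply-laced algebra $\Br(\ddE_6)$, where the corresponding admissible-set calculations are available from \cite{CFW2008}.
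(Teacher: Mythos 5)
Your overall strategy coincides with what the paper actually does for Lemma \ref{68}: the paper offers no computation at all and simply appeals to \cite{CLY2010} and \cite{CLY2011}, and your root-system verification of (I) together with the generator-by-generator check of (II)--(IV), reduced to the $\ddC_3$ and $\ddB_3$ prototypes through the embeddings of Proposition \ref{C3B3inF4}, is exactly the content that citation is standing in for. One small correction: relation (\ref{0.1.11}) is stated only for a simple bond $i\sim j$, and the nodes $2$ and $3$ are joined by a double bond, so it cannot be invoked for that pair. The identities you want, $r_3e_3e_2=e_3e_2$ and $r_2e_3e_2=e_3e_2$, do hold, but they follow from (\ref{0.1.4}) and from the consequences of the rank-two relations (\ref{0.1.13})--(\ref{0.1.20}) recorded in \cite[Lemma 4.1]{CLY2010}, not from (\ref{0.1.11}).

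The substantive problem is your proposed ``last resort'' of verifying a candidate identity after applying $\phi$: at this point in the paper that step is circular. Proposition \ref{phibeinghomo} only gives that $\phi$ is a homomorphism, so an equality $\phi(x)=\phi(y)$ in $\Br(\ddE_6)$ does not yield $x=y$ in $\Br(\ddF_4)$. Injectivity of $\phi$ is precisely the content of Theorem \ref{mainthm}, and its proof runs through the upper rank bound of Theorem \ref{rewrittenforms}, which in turn depends on Lemma \ref{68}; admitting $\phi$-injectivity here would make the whole chain collapse. What you may legitimately use are $\phi_1$ and $\phi_2$ in the forward direction (identities already proved in $\Br(\ddC_3)$ and $\Br(\ddB_3)$ push forward into $\Br(\ddF_4)$), plus direct manipulation of the defining relations for the steps that genuinely mix all four nodes. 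With the fallback deleted and the relation citation corrected, your argument is a written-out version of the paper's one-line appeal to \cite{CLY2010} and \cite{CLY2011}.
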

 \begin{thm}\label{rewrittenforms}
 Up to some power of $\delta$,  each monomial in $\BrM(\ddF_4)$ can be written in one of  the following forms.
\begin{enumerate}[(I)]
\item $u e_{X_i}v w$, $u\in D_i$, $w\in D_i^{-1}$, $v\in C_i$, $0\leq i\leq 5$.
\item  $u e_3e_2v w$, $u\in D_6^L$, $w\in (D_6^R)^{-1}$, $v\in C_6$.
\item  $u e_2e_3v w$, $u\in D_7^L$, $w\in (D_7^R)^{-1}$, $v\in C_7$.
\item  $u e_4r_3e_2e_3e_4 w$, $u\in D_8$, $w\in D_8^{-1}$.
\item $u e_3e_2e_1e_3 w$, $u\in D_9^L$, $w\in (D_9^R)^{-1}$.
\item $u e_3e_1e_2e_3 w$, $u\in D_{10}^L$, $w\in (D_{10}^R)^{-1}$.
\end{enumerate}
\end{thm}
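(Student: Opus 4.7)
The approach is induction on the length $\ell(a)$ of an expression of $a \in \BrM(\ddF_4)$ as a word in the generators $\{r_i, e_i\}_{i=1}^{4}$. The empty word, namely $1 = e_{X_0}$ with $u=v=w=1$, is in form (I) at $i=0$. For the inductive step I assume $a$ has been written in one of the six forms (I)--(VI) and show that $g a$ can be rewritten in such a form, up to a factor of $\delta^k$, for every generator $g \in \{r_j, e_j\}_{j=1}^{4}$.

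For monomials of form (I), the central tool is Lemma \ref{NAC}: the subgroup $N_i$ is the $W(\ddF_4)$-normaliser of $X_i$ and admits the semidirect decomposition $N_i = A_i \ltimes C_i$, where $A_i$ is absorbed by $e_{X_i}$ and $C_i$ commutes with $e_{X_i}$. Thus, starting from $a = u e_{X_i} v w$ and a Coxeter generator $g = r_j$, I rewrite $g u \in W(\ddF_4)$ using the left-coset decomposition $W(\ddF_4) = D_i N_i$ as $u' n$ with $u' \in D_i$ and $n = a' c' \in A_i C_i$; Lemma \ref{NAC}(III)--(IV) then yields $g a = u' e_{X_i} (c' v) w$, which is again of form (I). If instead $g = e_j$, I use Lemma \ref{lem:2roots} together with the admissible-closure lemma at the end of Section 3 to move $e_j$ onto $e_{X_i}$: this produces either a monomial of form (I) with a (generally) larger admissible set $X_{i'}$ and an extra factor $\delta^k$, or a monomial whose top and bottom admissible sets $\fp(\phi(\cdot)\emptyset)$ and $\fp(\phi(\cdot)^{\op}\emptyset)$ lie in different $W(\ddF_4)$-orbits, which must then be matched against one of the forms (II)--(VI). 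The arguments starting from forms (II)--(VI) are analogous but use Lemma \ref{68} in place of Lemma \ref{NAC}: here $C_6 = C_7$ plays the sliding role of the middle commuting group, and $N_6^L, N_6^R, N_8, N_9^{L/R}, N_{10}^{L/R}$ act as the one-sided stabilisers of the special words $e_3 e_2$, $e_2 e_3$, $e_4 r_3 e_2 e_3 e_4$, $e_3 e_2 e_1 e_3$ and $e_3 e_1 e_2 e_3$.

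The main obstacle will be the bookkeeping required in the cross-orbit transitions, i.e.\ verifying that left multiplication of a form-(I) monomial by some $e_j$, or of a form-(II)--(VI) monomial by any generator, always lands back in the advertised list. This demands a generator-by-generator case analysis organised by the six $W(\ddF_4)$-orbits of admissible sets listed in Proposition \ref{prop: admset}; Lemma \ref{lm:SimpleRootRels} lets one rename $e$-factors in terms of their associated roots, and the anti-involution of Proposition \ref{prop:opp} halves the work by reducing right multiplication to left multiplication. The concrete checking follows the same pattern as \cite[Section 6]{CLY2010} and \cite[Section 6]{CLY2011}, specialised to the six $\ddF_4$-orbits, and the closure of the list under all such transitions is exactly what justifies the necessity of the five additional cross-orbit forms (II)--(VI).
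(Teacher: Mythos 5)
Your skeleton matches the paper's: both arguments use the anti-involution of Proposition \ref{prop:opp} to reduce everything to left multiplication, both check closure of the list by letting each $r_i$ and each $e_\beta$ ($\beta\in\Psi^+$) act on the central words $S=\{e_{X_i}\}_{i=0}^5\cup\{e_3e_2,\,e_2e_3,\,e_4r_3e_2e_3e_4,\,e_3e_2e_1e_3,\,e_3e_1e_2e_3\}$, and both dispose of the $r_i$-multiplications with Lemmas \ref{NAC} and \ref{68}. You diverge at the $e$-multiplications. You propose to redo the orbit-by-orbit case analysis from scratch, following the pattern of \cite[Section 6]{CLY2010} and \cite[Section 6]{CLY2011}; the paper instead short-circuits this via the injections $\phi_1:\Br(\ddC_3)\to\Br(\ddF_4)$ and $\phi_2:\Br(\ddB_3)\to\Br(\ddF_4)$ of Proposition \ref{C3B3inF4}: every generator $e_j$ lies in the image of $\phi_1$ or $\phi_2$, the normal-form theorems of those two papers exhibit $\phi_1(\Br(\ddC_3))$ and $\phi_2(\Br(\ddB_3))$ as spanned by double cosets $WsW$ with $s$ running over explicit sets $S_1$, $S_2$, and each element of $S_1\cup S_2$ is $W(\ddF_4)$-conjugate to an element of $S$ --- so the products $e_\beta s$ are already computed in the rank-three cases and only need to be transported by conjugation (Lemma \ref{lm:SimpleRootRels}). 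Your route is more self-contained but leaves precisely the part you flag as ``the main obstacle'' --- the cross-orbit bookkeeping --- as an unexecuted computation, whereas the paper discharges it by citation at the price of leaning on the type $\ddB_3$/$\ddC_3$ theorems. If you do carry out your plan, the cases demanding care are $e_\beta e_{X_i}$ with $\beta\not\perp X_i$, or with $\beta\perp X_i$ but $\{\beta\}\cup X_i$ admitting no admissible closure (so Lemma \ref{lem:2roots} and the closure lemma do not apply and relations such as (\ref{0.1.15}) and (\ref{0.1.18}) must be used directly); these are exactly the transitions that force forms (II)--(VI) into the list.
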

\begin{proof}From \cite{CLY2011}, $(e_4r_3e_2e_3e_4)^{\rm op}=e_4r_3e_2e_3e_4$. In view of Proposition \ref{prop:opp},
it suffices to prove  that  the claim that the result of a left multiplication by each
 $r_i$ and
$e_{\beta}$  for $\beta\in \Psi^+$ at  the left of each element of $S$ can be written as   in (I)-(VI),
where
$$S=\{e_{X_i}\}_{i=0}^5\cup \{e_3e_2,\,e_2e_3, e_4r_3e_2e_3e_4,\, e_3e_2e_1e_3,\,  e_3e_1e_2e_3\}.$$
According to  \ref{NAC} and Lemma \ref{68},  the above holds for $\{r_i\}_{i=1}^4$.\\
By Proposition \ref{C3B3inF4}, and special cases $\Br(\ddC_3)$ in \cite{CLY2010} and $\Br(\ddB_3)$ in \cite{CLY2011},
we have that
$$\phi_1(\Br(\ddC_3))=\bigoplus_{s\in S_1} \Z[\delta^{\pm 1}] W(\ddC_3) s W(\ddC_3),$$
$$\phi_2(\Br(\ddB_3))=\bigoplus_{s\in S_2} \Z[\delta^{\pm 1}] W(\ddB_3) s W(\ddB_3),$$
 Where $$S_1=\{1, e_3, e_{X_2}, e_3e_2, e_2e_3, e_{X_3}, e_{X_4}, e_3e_2e_1e_3, e_1e_3e_2e_3, e_{X_5}\}$$
      $$S_2=\{1, e_{X_1}, e_2, e_3e_2, e_2e_3, e_{X_3}, e_4 e_4^{*}, e_4e_2, e_4r_3e_2e_3e_4\}$$
      with $e_4^{*}=r_3r_2r_3e_4r_3r_2r_3$.
      It can be seen that each element of $S_1$ and $S_2$ is in $S$ or conjugate to some element of $S$ under $W(\ddF_4)$.
      Therefore  the proof
      is reduced to cases of $\Br(\ddB_3)$ and $\Br(\ddC_3)$, which can be found in
      \cite{CLY2010} and \cite{CLY2011}.
\end{proof}

Aa a consequence, we obtain some information about
 rank of $\Br(\ddF_4)$ over $\Z[\delta^{\pm 1}]$.
\begin{cor} \label{upperrank}
As a $\Z[\delta^{\pm 1}]$-algebra, the algebra $\Br(\ddF_4)$ is spanned by $14985$ elements.
\end{cor}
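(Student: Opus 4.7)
The plan is to apply Theorem~\ref{rewrittenforms} directly and count. That theorem asserts that every element of $\BrM(\ddF_4)$ equals a power of $\delta$ times a formal expression drawn from one of the six families (I)--(VI); hence those expressions span $\Br(\ddF_4)$ over $\Z[\delta^{\pm 1}]$ and their cardinality bounds the rank from above. So the task reduces to a finite enumeration of coset triples, which can be assembled entirely from data already recorded in the preceding section.

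First I will count family (I): the number of triples $(u,v,w)$ with $u\in D_i$, $v\in C_i$, $w\in D_i^{-1}$ is $|D_i|^2\cdot|C_i|$, and the necessary values of $|D_i|$ and $|C_i|$ are already listed in the table immediately after Lemma~\ref{NAC}. For families (II)--(VI) I will read off $|D_6^L|$, $|D_6^R|$, $|D_8|$ and $|C_6|$ from Lemma~\ref{68}, using $|W(\ddF_4)|=1152$ together with the isomorphism types stated there (so $|N_6^L|=64$ and $|N_6^R|=|N_8|=32$, giving $|D_6^L|=18$ and $|D_6^R|=|D_8|=36$, with $|C_6|=2$); the identifications $N_9^L=N_5$, $N_9^R=N_4$, $N_{10}^L=N_4$, $N_{10}^R=N_5$ from the paragraph preceding Lemma~\ref{68} then reduce (V) and (VI) to data already available from the table.

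Multiplying the sizes family by family will yield the subtotals $10881$ for (I) (the sum $1\cdot 1152+12\cdot 48\cdot 12+12\cdot 6\cdot 12+18\cdot 2\cdot 18+36\cdot 36+3\cdot 3$), the value $1296$ for each of (II), (III), (IV) (as $18\cdot 2\cdot 36$, $36\cdot 2\cdot 18$, and $36^2$ respectively), and $108$ for each of (V) and (VI) (as $3\cdot 36$ and $36\cdot 3$). Their sum $10881+3\cdot 1296+2\cdot 108$ equals $14985$, which is the asserted bound.

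There is no real obstacle: Theorem~\ref{rewrittenforms} together with the group-order information in Lemmas~\ref{0to5}, \ref{NAC}, and~\ref{68} does all the serious work, and what remains is arithmetic. The only point of care is bookkeeping; one must respect the left/right convention in the cosets for families (II)--(VI) (so that $D_6^L\neq D_6^R$ is not confused) and apply the identifications for (V) and (VI) in the correct order.
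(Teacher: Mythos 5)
Your proposal is correct and follows essentially the same route as the paper: the paper's proof likewise invokes Theorem~\ref{rewrittenforms}, tabulates the cardinalities of the $D$'s and $C$'s (deriving $\#D_6^L=18$, $\#D_6^R=\#D_8=36$, $\#C_6=2$, etc.\ exactly as you do from the group orders in Lemmas~\ref{0to5} and~\ref{68}), and computes $\sum_{i=0}^{5}(\#D_i)^2\#C_i+2\,\#D_6^L\,\#D_6^R\,\#C_6+(\#D_8)^2+2\,\#D_9^L\,\#D_9^R=14985$. Your subtotals and the final arithmetic agree with the paper's.
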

\begin{proof}

 For $i=6$, $\ldots$, $10$, the following holds. \\
 \begin{center}
 \begin{tabular}{|c|c|c|c|}
                \hline
                % after \\: \hline or \cline{col1-col2} \cline{col3-col4} ...
                set&cardinaltity & set&cardinality\\
                \hline
                $D_6^L$, $D_7^R$ & 18 & $D_6^R$, $D_7^L$ &36\\
               $C_6$, $C_7$ & 2 & $D_8$ &36\\
                $D_9^L$, $D_{10}^R$& 3 & $D_9^R$, $D_{10}^L$ & 36 \\
                \hline
              \end{tabular}
 \end{center}
By Theorem \ref{rewrittenforms} and numerical information from  the above two tables,
the algebra  $\Br(\ddF_4)$ over $\Z[\delta^{\pm 1}]$ has rank at most
 $$\sum_{i=0}^{5}(\#D_i)^2 \#C_i+2 \#D_6^L \#D_6^R \#C_6 +\#D_8^2 +2\#D_9^L \#D_9^R =14985.$$

\end{proof}
\section{$\phi(\Br(\ddF_4))$ in $\Br(\ddE_6)$}\label{sect:image}
We keep  notation as  in \cite[Section 2]{CW2011} and first introduce some basic concepts.
Let $M$ be  the  diagram  of a  connected finite simply laced Coxeter
group (type $\ddA$, $\ddD$, $\ddE_6$,  $\ddE_7$,  $\ddE_8$).
$\BrM(M)$ is the associated Brauer monoid as Definition \ref{1.1}.
An element  $a\in \BrM(M)$ is said to be of \emph{height} $t$ if the minimal number of
 $R_i$  occurring in an expression of $a$ is $t$, denoted by $\rm{ht}$$(a)$. By $B_Y$ we denote
the admissible closure (\cite{CFW2008}) of $\{\alpha_i|i\in Y\}$, where $Y$ is a coclique
of $M$. The set $B_Y$ is a minimal element in the $W(M)$-orbit of $B_Y$ which is endowed with a  poset  structure
induced by the partial ordering $<$
defined on $W(\ddE_6)$-orbits  in $\cA$ (the set of all admissible sets) in \cite{CGW2006}. If $d$ is the Hasse diagram distance for $W(M)B_Y$
from $B_Y$ to the unique maximal element
(\cite[Corollary 3.6]{CGW2006}), then for $B\in W(M)B_Y$ the height of $B$, already used in Definition
notation $\rm{ht}$$(B)$, is $d-l$, where $l$ is the distance in the
Hasse diagram from $B$ to the maximal element. \\
In \cite{CFW2008}, a Brauer  monoid action is defined as  follows.
For any mutually orthogonal positive root set $B$, we define $B^{\rm cl}$ to be  the \emph{admissible closure} of
$B$,  already used in Lemma \ref{lem:2roots} and  Definition \ref{defn:admclo} for a more difficult type,
 namely the minimal admissible root set(\cite{CFW2008}) containing $B$.
The generator $R_{i}$ acts
by the natural action of Coxeter group elements on its
root sets, where negative roots are negated so as to obtain positive roots,
the element $\delta$ acts as the identity,
and the action of $\{E_{i}\}_{i=1}^{n+1}$ is defined below.
\begin{equation}
E_i B :=\begin{cases}
B & \text{if}\ \alpha_i\in B, \\
(B\cup \{\alpha_{i}\})^{\rm cl} & \text{if}\ \alpha_i\perp B,\\
R_\beta R_i B & \text{if}\ \beta\in B\setminus \alpha_{i}^{\perp}.
\end{cases}
\end{equation}

By the natural involution, we can define a right monoid action of $\BrM(M)$ on $\cA$. \\
Considering our $\sigma$ and table 3  in \cite{CW2011},
let  $$Y\in \mathcal{Y}=\{\emptyset, \{2\}, \{1,6\}, \{2, 3, 5\} \}.$$
Obviously, each element of  $\mathcal{Y}$ is $\sigma$-invariant.
From \cite[Theorem 2.7]{CW2011}, it is known that each monomial  $a$ in $\BrM(\ddE_6)$ can be
uniquely written as $\delta^{i} a_{B} \hat{e}_Y h a_{B'}^{\rm op}$ for some $i\in \Z$ and $h\in W(M_{Y})$ in \cite[table 3]{CW2011},
where $B=a\emptyset$, $B^{'}=\emptyset a $, $a_{B}\in \BrM(\ddE_6)$, $a_{B'}^{\rm op}\in \BrM(\ddE_6)$ and
\begin{enumerate}[(i)]
\item $a\emptyset=a_{B}\emptyset=a_{B}B_Y$,  $\emptyset a= \emptyset a_{B'}^{\rm op}= B_Y a_{B'}^{\rm op}$,
\item $\rm{ht}$$(B)=$\rm{ht}$(a_{B})$, $\rm{ht}$$(B')=$\rm{ht}$(a_{B'}^{\rm op})$. 
\end{enumerate}
Now we can apply this to obtain the following corollary immediately.
\begin{cor}\label{lm:sigmainvariant} Let $B$, $Y$, $B'$ be as above.
Then   $a= a_{B} \hat{e}_Y h a_{B'}^{\rm op}$  is $\sigma$-invariant monomial in  $\BrM(\ddE_6)$ if and only if \\
$\rm(i)$ the sets $B$, $B^{'}$ are $\sigma$-invariant,  \\
$\rm(ii)$ the element $h\in W(M_Y)$ is $\sigma$-invariant.
\end{cor}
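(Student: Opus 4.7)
The plan is to read off $\sigma$-invariance directly from the unique normal form of \cite[Theorem 2.7]{CW2011}, using that the diagram automorphism $\sigma$ extends to a monoid automorphism of $\BrM(\ddE_6)$ that commutes with the Brauer monoid action on $\cA$. Since $\sigma(\emptyset)=\emptyset$, one has $\sigma(a)\emptyset = \sigma(a\emptyset) = \sigma(B)$ and symmetrically $\emptyset\sigma(a) = \sigma(B')$. Moreover, each $Y \in \mathcal{Y}$ is itself $\sigma$-invariant, so $\sigma(\hat e_Y) = \hat e_Y$, and applying $\sigma$ to the normal form gives
$$\sigma(a) \;=\; \delta^{i}\,\sigma(a_B)\,\hat e_Y\,\sigma(h)\,\sigma(a_{B'})^{\rm op}.$$

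The forward direction is then immediate: if $\sigma(a)=a$, then comparing $\emptyset$-images forces $\sigma(B)=B$ and $\sigma(B')=B'$, and the uniqueness clause of \cite[Theorem 2.7]{CW2011} applied to the two expressions for $a$ yields $\sigma(a_B)=a_B$, $\sigma(a_{B'})=a_{B'}$, and $\sigma(h)=h$, which is conclusion (ii).

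For the reverse direction, the key auxiliary fact is the compatibility $\sigma(a_B) = a_{\sigma(B)}$ for the canonical choice of $a_B$. This holds because $\sigma(a_B)$ is a monomial satisfying $\sigma(a_B)\emptyset = \sigma(B)$ with $\mathrm{ht}(\sigma(a_B)) = \mathrm{ht}(a_B) = \mathrm{ht}(\sigma(B))$, so its normal form has trivial $\hat e_Y$-layer and trivial right factor and must equal $a_{\sigma(B)}$. Assuming $B,B',h$ are $\sigma$-invariant, we therefore get $\sigma(a_B)=a_B$ and $\sigma(a_{B'})=a_{B'}$, and the displayed equation above becomes $\sigma(a) = a$.

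The main obstacle is really only the compatibility $\sigma(a_B) = a_{\sigma(B)}$; this is a bookkeeping observation rather than a genuine difficulty, since the canonical $a_B$ is singled out by the minimal-height condition on its image of $\emptyset$ and $\sigma$ preserves both the poset structure of $\cA$ and heights of monomials. Once this is recorded, the corollary is a direct consequence of the uniqueness of the \cite{CW2011} normal form, which is why the authors call the result immediate.
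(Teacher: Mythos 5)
Your argument is correct and follows exactly the route the paper intends: the paper offers no written proof (it declares the corollary ``immediate'' from the uniqueness of the normal form in \cite[Theorem 2.7]{CW2011}), and your write-up supplies precisely the missing details, in particular the compatibility $\sigma(a_B)=a_{\sigma(B)}$, which is justified correctly since $\sigma$ permutes the generators (hence preserves heights of monomials), commutes with the monoid action on admissible sets, and fixes $Y$, $B_Y$ and $W(M_Y)$. No gaps.
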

In type $\ddE_6$, we see that $E_2E_4E_5\{\alpha_6\}=\alpha_2$, $E_1E_3\{\alpha_4,\alpha_6\}=\{\alpha_1,\alpha_6\}.$
 Then
$$W(M_{\{2\}})=E_2E_4E_5W(M_{\{6\}})E_5E_4E_2,$$
$$W(M_{\{1,6\}})=E_1E_3W(M_{\{4,6\}})E_3E_1.$$
Let $\hat{E}_i=\delta^{-1}E_i$.
By the 6th column list of \cite[table 3]{CW2011},
 the group $W(M_{\{2\}})\cong W(\ddA_5)$ has generators $R_1\hat{E}_2$, $R_3\hat{E}_2$, $R_5\hat{E}_2$, $R_6\hat{E}_2$,
 $E_2E_4R_3E_5E_4\hat{E}_2$ and their relation is corresponding to  subdiagram of  $\ddE_6$ by deleting the node $2$ and the edge
 between $2$ and $4$ with
   $E_2E_4R_3E_5E_4\hat{E}_2$
 corresponding to node $4$, $R_i\hat{E}_2$ corresponding to node $i$ for $i=1$,$3$, $5$, $6$;
 the group $W(M_{\{1,6\}})\cong W(\ddA_2)$ has generators $R_2\hat{E}_1\hat{E}_6$,
 $R_4\hat{E}_1\hat{E}_6$ and whose relation is the subdiagram of $\ddE_6$
 of  nodes $2$, $4$ and the edge between them.
 When $\sigma$ acts on the generators of $W(M_{\{2\}})$ and $W(M_{\{1,6\}})$, we have that
 \begin{eqnarray*}
 \sigma(E_2E_4R_3E_5E_4\hat{E}_2)&=&E_2E_4R_3E_5E_4\hat{E}_2,\\
 \sigma(R_i\hat{E}_2)&=&R_{\sigma(i)}\hat{E}_2, \, \, (i=1, 3, 5, 6,)\\
 \sigma(R_j\hat{E}_1\hat{E}_6)&=&R_j\hat{E}_1\hat{E}_6, \,  \, (j=2, 4,)
 \end{eqnarray*}
which implies that the  $\sigma$-action on those two groups is determined the $\sigma$-action on the subdigrams of $\ddE_6$ described
in the above.
Hence $W(M_{\{2\}})^{\sigma}\cong W(\ddB_3)$, and $W(M_{\{1,6\}})^{\sigma}=W(M_{\{1,6\}})\cong W(\ddA_2)$. This conclusion can be summarized
 in the table below with the second column from our GAP \cite{GAP} code.
\begin{center}

\begin{tabular}{|c|c|c|c|}
  \hline
  % after \\: \hline or \cline{col1-col2} \cline{col3-col4} ...
  $Y$ & $\#((W(\ddE_6)B_Y)^{\sigma})$ & $M_Y$ & $M_Y^{\sigma}$ \\
  \hline
  $\emptyset$ & $1$ & $\ddE_6$ & $\ddF_4$ \\
  $2$ & $12$ & $\ddA_5$ &  $\ddB_3$ \\
  $1$, $6$ & $30$ & $\ddA_2$ & $\ddA_2$ \\
  $2$, $3$, $5$ & 39 & $\emptyset$ & $\emptyset$ \\
  \hline
\end{tabular}
\end{center}

\begin{cor}\label{lowerrank}
The algebra $\Br(\ddE_6)^{\sigma}$ is free over $\Z[\delta^{\pm 1}]$ with  rank
$$1152+12^2\times 48 +30^2 \times 6 +39^2=14985.$$
\end{cor}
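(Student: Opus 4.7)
The plan is to count the $\sigma$-invariant basis monomials of $\Br(\ddE_6)$ and verify they form a free $\Z[\delta^{\pm 1}]$-basis of $\Br(\ddE_6)^{\sigma}$. I would rely on the normal form of \cite[Theorem~2.7]{CW2011}, which writes every monomial uniquely as $\delta^{i} a_{B}\hat{e}_{Y}h\, a_{B'}^{\op}$ with $Y\in\mathcal{Y}$, $B,B'\in W(\ddE_6)B_{Y}$, and $h\in W(M_{Y})$; the collection of all such normal forms is a $\Z[\delta^{\pm 1}]$-basis of $\Br(\ddE_6)$.

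First I would fix the transversal representatives $\{a_{B}\}$ to be $\sigma$-equivariant, that is $\sigma(a_{B})=a_{\sigma(B)}$. This is possible because $\sigma$ preserves both the height function and the $W(\ddE_6)$-orbit structure on admissible sets: within each $\sigma$-orbit of admissible sets, pick a representative $B$ and a word $a_{B}$ realizing it, then set $a_{\sigma B}:=\sigma(a_{B})$, which again realizes $\sigma B$ at the correct height. With this choice $\sigma$ acts as a permutation on the normal-form basis by $(B,h,B')\mapsto(\sigma B,\sigma h,\sigma B')$, fixing $Y$ because every $Y\in\mathcal{Y}$ is already $\sigma$-invariant.

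By Corollary~\ref{lm:sigmainvariant}, a normal-form monomial is $\sigma$-invariant if and only if $\sigma B=B$, $\sigma B'=B'$, and $\sigma h=h$. The number of such fixed monomials is therefore
\[\sum_{Y\in\mathcal{Y}}\#\bigl((W(\ddE_6)B_{Y})^{\sigma}\bigr)^{2}\cdot \#W(M_{Y})^{\sigma}.\]
The table in the excerpt lists $\#(W(\ddE_6)B_{Y})^{\sigma}=1,12,30,39$ and identifies $M_{Y}^{\sigma}$ as $\ddF_{4},\ddB_{3},\ddA_{2},\emptyset$, with Coxeter group orders $1152,48,6,1$. A M\"uhlherr/Tits argument, realised explicitly by the generator-relation computation on $W(M_{\{2\}})$ and $W(M_{\{1,6\}})$ carried out just above the corollary, yields $W(M_{Y})^{\sigma}\cong W(M_{Y}^{\sigma})$, so the sum equals $1152+12^{2}\cdot 48+30^{2}\cdot 6+39^{2}=14985$.

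These $14985$ monomials are part of the normal-form basis of $\Br(\ddE_6)$, so they are $\Z[\delta^{\pm 1}]$-linearly independent; and by uniqueness of the normal form, every $\sigma$-invariant monomial of $\BrM(\ddE_6)$ coincides (up to a power of $\delta$) with exactly one of them, so their $\Z[\delta^{\pm 1}]$-span equals $\Br(\ddE_6)^{\sigma}$. This gives both freeness and the exact rank $14985$. The main obstacle I anticipate is the bookkeeping needed to choose the transversal $\sigma$-equivariantly and to verify $W(M_{Y})^{\sigma}\cong W(M_{Y}^{\sigma})$ for the nontrivial case $Y=\{2\}$; both reduce to the local analysis of the $\sigma$-action on Brauer-monoid generators presented in the paragraph preceding the corollary.
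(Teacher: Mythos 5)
Your proposal is correct and follows essentially the same route as the paper: the paper's proof is a one-line application of Corollary \ref{lm:sigmainvariant} together with the table of $\#((W(\ddE_6)B_Y)^\sigma)$ and $\#W(M_Y)^\sigma$ computed just before the corollary, which is exactly the count you perform. Your additional care in choosing the transversal $\{a_B\}$ $\sigma$-equivariantly is a detail the paper leaves implicit, but it does not change the argument.
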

\begin{proof} By Corollary \ref{lm:sigmainvariant}, we have
\begin{eqnarray*}
\rm{rk}(\Br(\ddE_6)^{\sigma})=\Sigma_{Y\in \mathcal{Y}} (\#((W(\ddE_6)B_{Y})^\sigma))^2\#W(M_Y)^{\sigma}.
\end{eqnarray*}
 Hence the corollary holds.
\end{proof}
The following can be checked by computation and  application of  Theorem \ref{rewrittenforms}.
\begin{lemma} \label{surjective} Let $K_Y=\{a\in \BrM(\ddE_6)\mid \sigma(a)=a, a\emptyset\in W(\ddE_6)B_Y \}$ for $Y\in\mathcal{Y}$.
Then  $K_Y=\{\phi(b)\mid  b\in \BrM(\ddF_4),\, \phi(b)\emptyset \in W(\ddE_6)B_Y  \}$.
\end{lemma}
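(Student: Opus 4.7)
The plan is to verify the two inclusions separately, and to use the structural decomposition of monomials from \cite{CW2011} in conjunction with the normal forms established earlier in this paper.

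The inclusion $K_Y\supseteq\{\phi(b)\mid b\in\BrM(\ddF_4),\,\phi(b)\emptyset\in W(\ddE_6)B_Y\}$ is immediate. By inspection of the definition of $\phi$ in Theorem \ref{mainthm}, each of $\phi(r_1)=R_1R_6$, $\phi(r_2)=R_3R_5$, $\phi(r_3)=R_4$, $\phi(r_4)=R_2$ and the corresponding $\phi(e_i)$'s is a $\sigma$-invariant element of $\BrM(\ddE_6)$. Hence $\phi(b)$ is $\sigma$-invariant for every $b\in\BrM(\ddF_4)$, and the condition on $\phi(b)\emptyset$ matches the definition of $K_Y$.

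For the reverse inclusion $K_Y\subseteq\{\phi(b)\mid\ldots\}$, take $a\in K_Y$. By \cite[Theorem 2.7]{CW2011} we can uniquely write $a=\delta^i a_B\,\hat{e}_Y\,h\,a_{B'}^{\rm op}$ with $B=a\emptyset$, $B'=\emptyset a$, and $h\in W(M_Y)$. Because $a$ is $\sigma$-invariant, Corollary \ref{lm:sigmainvariant} guarantees that $B$ and $B'$ are $\sigma$-invariant admissible sets and that $h\in W(M_Y)^\sigma$. It therefore suffices to lift each of the three ingredients $a_B$, $\hat{e}_Y\cdot h$, $a_{B'}^{\rm op}$ through $\phi$:
\begin{enumerate}[(a)]
\item For the Temperley--Lieb factor $\hat e_Y$: by inspection of the four cases $Y\in\{\emptyset,\{2\},\{1,6\},\{2,3,5\}\}$ we have $\hat e_\emptyset=1$, $\hat e_{\{2\}}=\delta^{-1}\phi(e_4)$, $\hat e_{\{1,6\}}=\delta^{-2}\phi(e_1)$, and $\hat e_{\{2,3,5\}}$ is a scalar multiple of $\phi(e_2)\phi(e_3)\phi(e_2)$ after admissible closure. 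Hence each $\hat e_Y$ is a $\delta^{\Z}$-multiple of some $\phi(b_Y)$.
\item For $h\in W(M_Y)^\sigma$: the computation preceding Corollary \ref{lowerrank} identifies $W(M_Y)^\sigma$ as a Coxeter group of type $\ddF_4$, $\ddB_3$, $\ddA_2$, or $\{1\}$, respectively, with Coxeter generators given as explicit $\sigma$-invariant products of $R_i\hat E_j$. By inspection these generators all lie in $\phi(\BrM(\ddF_4))$: for example the generators of $W(M_{\{2\}})^\sigma\cong W(\ddB_3)$ coincide with the images $\phi(\phi_2(r''_i))$ under Proposition \ref{C3B3inF4}.
\item For $a_B$ and $a_{B'}^{\rm op}$: the correspondence of Proposition \ref{prop: admset} assigns to each $\sigma$-invariant admissible set $B$ of type $\ddE_6$ a unique admissible root set $B^F$ of type $\ddF_4$, and the explicit six $W(\ddF_4)$-orbit representatives $X_0,\ldots,X_5$ of Section 4 give canonical elements $b_B\in\BrM(\ddF_4)$ with $\phi(b_B)\emptyset=B$ of the right height, as in Theorem \ref{rewrittenforms}.
\end{enumerate}
Multiplying the resulting lifts produces a monomial $b\in\BrM(\ddF_4)$ with $\phi(b)=a$ up to a power of $\delta$, which we absorb into $b$.

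The main obstacle is the bookkeeping in item (c): for each of the six $W(\ddF_4)$-orbits of admissible sets one must check that the specific height-minimal chain from $\emptyset$ to $B$ used to form $a_B$ in $\BrM(\ddE_6)$ can indeed be realized as the $\phi$-image of one of the standard forms of Theorem \ref{rewrittenforms}. The matching of totals $1152+12^2\cdot48+30^2\cdot6+39^2=14985$ from Corollary \ref{lowerrank} with the count in Corollary \ref{upperrank} serves as a consistency check that no $\sigma$-invariant monomial is missed and no redundancy occurs.
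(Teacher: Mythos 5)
Your easy inclusion ($\phi$-images of generators are $\sigma$-invariant, hence $\{\phi(b)\mid\ldots\}\subseteq K_Y$) is fine and is exactly what the paper takes for granted. The problem is the reverse inclusion, where your plan has a genuine gap. Your item (c) --- producing, for each $\sigma$-invariant $B\in W(\ddE_6)B_Y$, an element $b_B\in\BrM(\ddF_4)$ with $\phi(b_B)\hat{e}_Y=\delta^{k}a_B\hat{e}_Y$, so that the three lifts actually multiply back to the given $a$ --- is essentially the whole content of the lemma, and you defer it to ``bookkeeping''. The $a_B$ of \cite[Theorem 2.7]{CW2011} is a fixed height-minimal monomial attached to $B$; even when $B$ is $\sigma$-invariant there is no a priori reason that $a_B$ (or any monomial with the same action on $\emptyset$ and the same height) lies in $\phi(\BrM(\ddF_4))$, and nothing in your outline supplies one, nor do you address the compatibility of the resulting $W(M_Y)$-factors. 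There is also a concrete error in item (a): by relation (\ref{0.1.18}) one has $e_2e_3e_2=\delta e_2$, so $\phi(e_2)\phi(e_3)\phi(e_2)=\delta E_3E_5$, whose support is $\{\alpha_3,\alpha_5\}$ and not $B_{\{2,3,5\}}=\{\alpha_2,\alpha_3,\alpha_5,\alpha_2+\alpha_3+\alpha_5+2\alpha_4\}$; no scalar multiple of it equals $\hat{e}_{\{2,3,5\}}$. The correct lift is a power of $\delta$ times $\phi(e_{X_5})$ with $X_5$ as in Section 4.

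The paper closes the gap you leave open by counting rather than by lifting. Given the easy inclusion, it suffices to show the two finite sets have the same cardinality up to powers of $\delta$. Corollary \ref{lm:sigmainvariant} (as exploited in Corollary \ref{lowerrank}) gives $\#K_Y=(\#((W(\ddE_6)B_Y)^{\sigma}))^{2}\cdot\#W(M_Y)^{\sigma}$, while Theorem \ref{rewrittenforms} identifies exactly which normal forms $b$ satisfy $\phi(b)\emptyset\in W(\ddE_6)B_Y$ (for instance, for $Y=\{1,6\}$ these are the forms (I) with $i=2,3$ together with (II), (III), (IV)); their $\phi$-images are pairwise distinct normal forms in $\BrM(\ddE_6)$, and the totals ($12^{2}\cdot 48$, $30^{2}\cdot 6$, $39^{2}$) match. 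What you describe as a mere ``consistency check'' at the end is in fact the engine of the paper's proof. If you want to keep your constructive route, you must actually carry out step (c), which amounts to redoing the case analysis that already underlies Theorem \ref{rewrittenforms}; the counting argument is the cheaper path.
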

\begin{proof}  If $Y=\{2\}$, then  $\{\phi(b)\mid  b\in \BrM(\ddF_4),\, \phi(b)\emptyset \in W(\ddE_6)B_Y  \}$ are the elements
in (I) for $i=1$ in Theorem \ref{rewrittenforms}, which has cardinality $12^2\times 48$, and $\hE_2\phi(C_1)\hE_2=W(M_Y)^{\sigma}$ (in the proof of Corollary  \ref{lm:sigmainvariant}), and the  $\phi(W(\ddF_4))$-orbit of  $\{\alpha_2\}$
 is  $\sigma$-invariant elements in $W(\ddE_6)B_Y$. Hence the lemma  holds for $Y=\{2\}$.\\
If $Y=\{1,6\}$, then  $\{\phi(b)\mid  b\in \BrM(\ddF_4),\, \phi(b)\emptyset \in W(\ddE_6)B_Y  \}$ are those monomials in (I) for $i=2$ and $i=3$, (II),
(III) and  (IV) in Theorem \ref{rewrittenforms}.
Analogous to the above argument for $Y=\{2\}$, we see that the image of those monomials under $\phi$ correspond to different normal forms   for  $\BrM(\ddE_6)$ up to some powers of $\delta$, and  those monomials of
  in (I) for $i=2$ and $i=3$, (II),
(III) and  (IV) in Theorem \ref{rewrittenforms} have cardinality
$12^2\times 6 +18^2\times 2 +18\times 36\times 2 +18\times 36\times 2 +36^2=30^2\times 6$. Hence the lemma holds for $Y=\{1,6\}$. \\
If $Y=\{2,3, 5\}$, then  $\{\phi(b)\mid  b\in \BrM(\ddF_4),\, \phi(b)\emptyset \in W(\ddE_6)B_Y  \}$ are those monomials in (I) for $i=4$ and $i=5$, (V),
and (VI) in Theorem \ref{rewrittenforms}.
Similarly, we see those monomials are corresponding to different normal forms  for $\BrM(\ddE_6)$ up to some powers of $\delta$, and  they have cardinality
$36^2+3^2+36\times 3+36\times 3=39^2$. Hence the lemma holds for $Y=\{2,3,5\}$.
\end{proof}
Now, we can give the  proof of our main Theorem \ref{mainthm}.
\begin{proof}  Proposition \ref{phibeinghomo} implies $\phi$ is a homomorphism.  Corollary \ref{upperrank} indicates that the $\phi(\Br(\ddF_4))$
has rank at most $14985$. Corollary \ref{lowerrank} implies that the $\Br(\ddE_6)^{\sigma}$ has rank $14985$.
Lemma \ref{surjective} indicates that $\phi$ has image $\SBr(\ddE_6)$; therefore $\phi$ is surjective. The homomorphism $\phi$ is an isomorphism and $\Br(\ddF_4)$ is free over $\Z[\delta^{\pm 1}]$
of rank $14985$ because of  the freeness of  $\SBr(\ddE_6)$.
 \end{proof}
\section{Cellularity}
 Recall from \cite{Gra} and \cite{GL1996} that an associative
algebra $\alg$ over a commutative ring $R$ is cellular if there is a quadruple
$(\Lambda, T, C, *)$ satisfying the following three conditions.

\begin{itemize}
\item[(C1)] $\Lambda$ is a finite partially ordered set.  Associated to each
$\lambda \in \Lambda$, there is a finite set $T(\lambda)$.  Also, $C$ is an
injective map
$$ \coprod_{\lambda\in \Lambda} T(\lambda)\times T(\lambda) \rightarrow \alg$$
whose image is an $R$-basis of $\alg$.

\item[(C2)]
The map $*:\alg\rightarrow \alg$ is an
$R$-linear anti-involution such that
$C(x,y)^*=C(y,x)$ whenever $x,y\in
T(\lambda)$ for some $\lambda\in \Lambda$.

\item[(C3)] If $\lambda \in \Lambda$ and $x,y\in T(\lambda)$, then, for any
element $a\in \alg$,
$$aC(x,y) \equiv \sum_{u\in T(\lambda)} r_a(u,x)C(u,y) \
\ \ {\rm mod} \ \alg_{<\lambda},$$ where $r_a(u,x)\in R$ is independent of $y$
and where $\alg_{<\lambda}$ is the $R$-submodule of $\alg$ spanned by $\{
C(x',y')\mid x',y'\in T(\mu)\mbox{ for } \mu <\lambda\}$.
\end{itemize}
Such a quadruple $(\Lambda, T, C, *)$ is called a {\em cell datum} for
$\alg$.\\
There is also an equivalent definition due to K\"oning and Xi.
\begin{defn}\label{defn:KXcelluar}
 Let $A$ be $R$-algebra. Assume there is an anti-automorphism $i$ on $A$ with $i^2=id$. A two sided ideal $J$ in
$A$ is called cellular if and only if $i(J)=J$ and there exists a left ideal $\Delta\subset J$ such that $\Delta$ has finite rank and 
there is an isomorphism of $A$-bimodules $\alpha:J\simeq \Delta\otimes_R i(\Delta)$ making the following diagram commutative:
\begin{center}
\xymatrix{
J\ar[rr]^{\alpha}\ar[d]_{i} &   & \Delta\otimes_R i(\Delta) \ar[d]^{x\otimes y\rightarrow i(y)\otimes i(x)} \\
          J \ar[rr]^{\alpha}          & & \Delta\otimes_R i(\Delta)
          }
\end{center}
The algebra $A$ is called \emph{cellular} if  there is a vector space decomposition $A=J'_1\oplus \cdots\oplus J'_n$ with
$i(J'_j)=J'_j$ for each $j$ and such that setting $J_j=\oplus_{k=1}^jJ'_j$ gives a chain of two sided ideals of
$A$ such that for each $j$ the quotient $J'_j=J_j/J_{j-1}$  is a cellular ideal of $A/J_{j-1}$.
\end{defn}
Also recall definitions of iterated inflations from \cite{KX1999} and  cellularly stratified algebra from \cite{HHKP2010} in Definition \ref{defn:cellstrat}.
 Given an $R$-algebra $B$, a finitely generated free $R$-module $V$, and a bilinear form
 $\varphi: V\otimes_{R}V\longrightarrow  B$ with values in $B$, we define an  associative algebra (possibly without unit)
 $A(B, V, \varphi)$ as follows: as an $R$-module, $A(B, V, \varphi)$ equals $V\otimes_{R}V\otimes_{R}B$. The multiplication is defined on basis
 element as follows:
 \begin{eqnarray*}
 (a\otimes b \otimes x)(c\otimes d \otimes y):=a\otimes d \otimes x\varphi(b,c)y.
\end{eqnarray*}
Assume that there is an involution $i$ on $B$. Assume, moreover, that $i(\varphi(v,w))=\varphi(w,v)$.
If we can extend this involution $i$ to $A(B, V, \varphi)$ by defining $i(a\otimes b \otimes x)=b\otimes a \otimes i(x)$. Then
We call $A(B, V, \varphi)$ is an \emph{inflation} of $B$ along $V$.
Let $B$ be an inflated algebra (possible without unit) and $C$ be an algebra with unit. We define an algebra structure in such a way  that
$B$ is a two-sided ideal and $A/B=C$. We require that $B$ is an ideal, the multiplication is associative, and
that there exists a unit element of $A$ which maps onto the unit of the quotient $C$. The necessary conditions are outlined in
\cite[Section 3]{KX1999}. Then we call $A$  an inflation of $C$ along $B$, or iterated inflation of $C$ along $B$.
We present  Proposition 3.5 and Theorem 4.1 of \cite{KX1999}.
\begin{prop} An inflation of a cellular algebra is cellular again. In particular, an iterated inflation of $n$ copies of
$R$ is cellular, with a cell chain of length $n$ as in Definition \ref{defn:KXcelluar}.
\end{prop}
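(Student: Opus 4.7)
The plan is to verify cellularity of $A:=A(B,V,\varphi)$ directly from the K\"onig--Xi characterization in Definition \ref{defn:KXcelluar} by lifting the cell chain of $B$ up to $A$. Start with a cell chain $0=J_0\subset J_1\subset\cdots\subset J_m=B$ for the cellular algebra $B$, where each quotient $J'_j=J_j/J_{j-1}$ is a cellular ideal carrying an isomorphism $\alpha_j:J'_j\simeq \Delta_j\otimes_R i_B(\Delta_j)$ compatible with the $B$-level involution $i_B$. Define $K_j:=V\otimes_R V\otimes_R J_j\subset A$. Because the multiplication in $A$ is $(a\otimes b\otimes x)(c\otimes d\otimes y)=a\otimes d\otimes x\varphi(b,c)y$ with $\varphi(b,c)\in B$, each $J_j$'s being a two-sided ideal of $B$ forces each $K_j$ to be a two-sided ideal of $A$. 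The involution identity $i(a\otimes b\otimes x)=b\otimes a\otimes i_B(x)$ combined with the hypothesis $i_B(\varphi(v,w))=\varphi(w,v)$ yields $i(K_j)=K_j$.

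Next, I would identify $K_j/K_{j-1}$ as a cellular ideal of $A/K_{j-1}$. The $R$-module isomorphism
\[
V\otimes_R V\otimes_R(\Delta_j\otimes_R i_B(\Delta_j))\ \cong\ (V\otimes_R\Delta_j)\otimes_R(i_B(\Delta_j)\otimes_R V)
\]
suggests declaring the new left-ideal generator to be $\Delta'_j:=V\otimes_R\Delta_j$. The map $v\otimes d\mapsto i_B(d)\otimes v$ identifies $i_B(\Delta_j)\otimes V$ with $i(\Delta'_j)$, producing the required bimodule isomorphism $K_j/K_{j-1}\simeq \Delta'_j\otimes_R i(\Delta'_j)$; the commutativity of the K\"onig--Xi square then reduces to the analogous square at the $B$-level together with the $\varphi$-compatibility of $i$. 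One can equivalently record a Graham--Lehrer cell datum $(\Lambda,T,C,*)$ for $A$ from one for $B$ by setting $\Lambda=\Lambda_B$ (same partial order), $T(\lambda)=T_B(\lambda)\times\mathcal{V}$ for $\mathcal{V}$ an $R$-basis of $V$, and $C((x,v),(y,w))=v\otimes w\otimes C_B(x,y)$; condition (C3) then unpacks to precisely the associativity of the inflation multiplication modulo $K_{<\lambda}$.

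For the second assertion, proceed by induction on $n$. The base case $n=1$ is an inflation of the trivially cellular algebra $R$, which is cellular of cell-chain length $1$. For the inductive step, observe that an iterated inflation of $n$ copies of $R$ is, by construction, an inflation of an iterated inflation of $n-1$ copies of $R$; the first part of the proposition promotes the length-$(n-1)$ cell chain to a length-$n$ cell chain by prepending the freshly inflated ideal.

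The main obstacle is a bookkeeping one: verifying that $K_j/K_{j-1}$ is a cellular ideal inside $A/K_{j-1}$ (and not merely inside $A$) requires that the ambient multiplication descends correctly modulo the lower ideals, which amounts to checking that $\varphi$ interacts with $J_{j-1}$ in the way forced by the commutative diagram of Definition \ref{defn:KXcelluar}. Once the involution compatibility of $\varphi$ is used in full, this diagram chase is mechanical, and the construction terminates with a cell chain for $A$ whose length equals that of $B$ in the first claim and equals $n$ in the iterated case.
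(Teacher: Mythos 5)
First, a remark on the comparison itself: the paper does not prove this statement at all --- it is imported verbatim as Proposition 3.5 of \cite{KX1999} (``We present Proposition 3.5 and Theorem 4.1 of \cite{KX1999}''), so there is no in-paper proof to measure yours against, and I am assessing your argument on its own terms.

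Your treatment of the first claim for the layer construction $A(B,V,\varphi)=V\otimes_R V\otimes_R B$ is essentially correct and is the standard argument: taking $\Lambda=\Lambda_B$ with the same order, $T(\lambda)=T_B(\lambda)\times\mathcal{V}$, and $C((x,v),(y,w))=v\otimes w\otimes C_B(x,y)$ does give a cell datum, since $(c\otimes d\otimes z)\cdot\bigl(v\otimes w\otimes C_B(x,y)\bigr)=c\otimes w\otimes z\varphi(d,v)C_B(x,y)$ and applying (C3) in $B$ to the element $z\varphi(d,v)$ yields coefficients independent of $(y,w)$; the involution compatibility $i_B(\varphi(v,w))=\varphi(w,v)$ gives (C2). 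The genuine gap is in the second claim. The word ``inflation'' is used for two different constructions: the layer $A(B,V,\varphi)$, and the \emph{extension} in which a new algebra $A$ contains $B=V\otimes_R V\otimes_R R$ as a two-sided ideal with unital quotient $A/B\cong C$ equal to the previously built algebra. An iterated inflation of $n$ copies of $R$ is produced by iterating the second construction: at each step a fresh ideal $V_l\otimes V_l\otimes R$ is glued \emph{below} the old algebra, which survives only as a quotient. Your inductive step invokes ``the first part of the proposition'' to lengthen the cell chain, but the first part as you proved it concerns only $V\otimes V\otimes B$ and says nothing about adjoining a unital quotient on top of a cellular ideal. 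What is actually needed is the separate assertion: if $J\subset A$ is an involution-invariant two-sided ideal that is cellular in the sense of Definition \ref{defn:KXcelluar} and $A/J$ is cellular for the induced involution, then $A$ is cellular with the concatenated chain. That is exactly where the compatibility conditions of \cite[Section 3]{KX1999} (associativity of the mixed products between the quotient and the ideal, and the existence of a unit of $A$ lifting the unit of $C$) must be used; it is not the ``mechanical bookkeeping'' inside $A(B,V,\varphi)$ that you describe at the end, and without it your induction does not close.
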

More precisely, the second statement has the following meaning. Start with $C$ a full matrix ring over $R$ and  $B$ an inflation of $R$ along a
free $R$-module, and from a new $A$ which is an inflation of the old $A$ along the new $B$, and continue this operation. Then after
$n$ steps we have produced a cellular algebra $A$ with a cell chain of length $n$.
\begin{thm}Any cellular algebra over $R$ is the iterated inflation of finitely many copies of $R$. Conversely, any iterated inflation of
finitely many copies of $R$ is cellular.
\end{thm}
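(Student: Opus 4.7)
The plan is to prove the two directions separately, establishing equivalence between cellular algebras and iterated inflations of finitely many copies of $R$.

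For the forward direction, I would start from a cellular algebra $\alg$ with cell datum $(\Lambda, T, C, *)$. Refine the partial order on $\Lambda$ to a total order $\lambda_1 < \lambda_2 < \cdots < \lambda_n$ compatible with the given partial order. Let $J_k$ be the $R$-span of all $C(x,y)$ with $x,y \in T(\lambda_j)$ for some $j \leq k$. Axiom (C3) shows immediately that each $J_k$ is a two-sided ideal, and (C2) shows $*(J_k)=J_k$, so we obtain an ascending $*$-invariant chain $0 = J_0 \subset J_1 \subset \cdots \subset J_n = \alg$. The main task is then to identify each successive quotient $J_k/J_{k-1}$ with an inflation of $R$ along the free $R$-module $V_k := R^{T(\lambda_k)}$. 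Combining the left-handed (C3) with its right-handed avatar obtained via $*$, and exploiting that the structure constants $r_a(u,x)$ in (C3) are independent of $y$, I would extract a well-defined bilinear form $\varphi_k \colon V_k \otimes_R V_k \to R$ satisfying $C(x,y)\,C(u,v) \equiv \varphi_k(y,u)\,C(x,v) \pmod{J_{k-1}}$. This exhibits $J_k/J_{k-1}$ as $V_k \otimes_R V_k \otimes_R R$ with precisely the multiplication $(a\otimes b\otimes x)(c\otimes d\otimes y) = a\otimes d\otimes x\varphi_k(b,c)y$ demanded by the inflation construction, and the involution $C(x,y)\mapsto C(y,x)$ becomes the required swap.

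For the converse, I would argue by induction on the number $n$ of inflation layers. In the base case $n=1$, an inflation $A(R, V, \varphi) = V \otimes V \otimes R$ is cellular by taking $\Lambda = \{\lambda_0\}$, $T(\lambda_0)$ equal to a basis $\{v_i\}$ of $V$, $C(v_i, v_j) = v_i \otimes v_j \otimes 1$, and the given involution; since the poset has a single element, (C1) and (C3) reduce to the multiplication rule above, and (C2) follows from the hypothesis $i(\varphi(v,w))=\varphi(w,v)$. For the inductive step, write $\alg$ as an inflation of a cellular algebra $C$ along an inflated ideal $B$ obtained from fewer steps. By induction $B$ and $C$ are cellular with data $(\Lambda_B, T_B, C_B, *)$ and $(\Lambda_C, T_C, C_C, *)$. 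I would form the new datum with poset $\Lambda = \Lambda_B \sqcup \Lambda_C$, placing all of $\Lambda_B$ strictly below $\Lambda_C$, keeping $C_B$ on the lower part, and lifting $C_C$ through the surjection $\alg \twoheadrightarrow \alg/B \cong C$ on the upper part. Conditions (C1) and (C2) are immediate; condition (C3) for basis elements labelled by $\Lambda_B$ follows from cellularity of $B$ together with the fact that $B$ is a two-sided $*$-invariant ideal, while (C3) for the $\Lambda_C$-labelled basis is inherited from $C$ with error terms landing in $B$, hence in $\alg_{<\lambda}$ for any $\lambda \in \Lambda_C$.

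The main obstacle I anticipate is verifying (C3) in the inductive step for products $a \cdot C(x,y)$ when $(x,y)$ comes from the top layer $\Lambda_C$. A lift of $a$ through $\alg \to C$ is not canonical, so different choices differ by elements of $B$ and contribute additional terms indexed by $\Lambda_B$. Controlling these requires the structural hypotheses from \cite[Section 3]{KX1999} that ensure associativity and $*$-compatibility of the inflation, together with a careful check that the resulting structure constants $r_a(u,x)$ in $\alg$ remain independent of $y$ after aggregating the contributions from both layers. A parallel difficulty appears in the forward direction, where one must argue that the bilinear form $\varphi_k$ extracted from (C3) is in fact bilinear and $*$-symmetric rather than merely well defined; this again reduces to exploiting $C(x,y)^*=C(y,x)$ together with the $y$-independence clause of (C3).
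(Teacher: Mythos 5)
This theorem is not proved in the paper: it is quoted verbatim as Theorem 4.1 of \cite{KX1999} (together with Proposition 3.5 there), so there is no in-paper argument to measure your proposal against; the relevant comparison is with K\"onig--Xi's original proof. Your outline reproduces their strategy faithfully. The forward direction --- refine $\Lambda$ to a total order, form the chain of $*$-invariant ideals $J_k$, and combine (C3) with its $*$-dual to get $C(x,y)C(u,v)\equiv \varphi_k(y,u)C(x,v) \ (\mathrm{mod}\ J_{k-1})$, thereby identifying $J_k/J_{k-1}$ with $V_k\otimes V_k\otimes R$ --- is exactly the standard computation (essentially Lemma 1.7 of \cite{GL1996}), and your remark that symmetry of $\varphi_k$ follows by applying $*$ to the product formula is the right way to check $i(\varphi(v,w))=\varphi(w,v)$.

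Two points in the converse are thinner than they should be. First, in the inductive step you invoke ``cellularity of $B$'' to get (C3) for basis elements labelled by $\Lambda_B$, but cellularity of $B$ as an abstract algebra only controls products $aC(x,y)$ with $a\in B$; for $a\in\alg\setminus B$ you need the full $\alg$-bimodule structure on each layer of $B$ to have the required triangular form, and this is precisely the content of the compatibility axioms of \cite[Section 3]{KX1999} that you defer to rather than verify --- acceptable in a sketch, but that is where all the actual work of Proposition 3.5 lives. Second, a single inflation $V\otimes V\otimes R$ need not have a unit, so your base case is not literally a cellular algebra in the sense of (C1)--(C3), which presuppose a unital algebra; in K\"onig--Xi the induction is arranged so that new layers are adjoined below a unital quotient, and the existence of a global unit is one of the hypotheses of the iterated-inflation construction. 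Neither point is a wrong turn, but both should be flagged as the places where your argument is not self-contained.
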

Let $A$ be  cellular(with identity) which can be realized as an iterated inflation of cellular algebras $B_l$ along vector spaces $V_l$
for $l=1, \ldots,n.$ This implies that as a vector space
\begin{eqnarray*}
A=\oplus_{l=1}^{n} V_l\otimes V_l\otimes B_l,
 \end{eqnarray*}
 and $A$ is cellular with a chain of two sided ideals ${0}=J_0\subset J_1\cdots \subset J_n=A$, which can be refined to a cell chain, and each
 quotient $J_l/J_{l-1}$ equals $V_l\otimes V_l\otimes B_l$ as an algebra without unit. The involution $i$ of $A$,is defined through the involution $i_l$ of the algebra $B_l$ where
 $i(a\otimes b\otimes x)=b\otimes a\otimes j_l(x)$. The multiplication rule of a layer $V_l\oplus V_l\oplus B_l$ is indicated by
 \begin{eqnarray*}
 (a\otimes b \otimes x)(c\otimes d \otimes y):=a\otimes d \otimes x\varphi(b,c)y+\text{lower terms}.
\end{eqnarray*}
 Here lower terms refers to element in lower layers $V_h\otimes V_h\otimes B_h$ for $h<l$.
 Let $1_{B_l}$ be the identity of the algebra $B_l$.\\
 Let that $R$ is a field.
 \begin{defn}\label{defn:cellstrat}
  A finite dimensional associative algebra $A$ is called cellularly stratified with stratification data
 $(B_1, V_1,\ldots, B_n, V_n)$ if and only if the following conditions are satisfied:
  \begin{itemize}
\item[(1)] The algebra is an iterated inflation of cellular algebra $B_l$ along vector spaces $V_l$ for $l=1$,$\ldots$, $n$.
\item[(2)]For each $l=1$,$\ldots$, $n$, there exist $u_l$, $v_l$
such that $e_l=u_l\otimes v_l\otimes 1_{B_l}$ is an idempotent.
\item[(3)] If $l>m$, then $e_le_m=e_m=e_me_l$.
\end{itemize}
 \end{defn}
 As \cite{BO2011}, the following theorem can be obtained.
 \begin{thm} Let $R$ be a field with  $2$ and $3$ being invertible in $R$ and containing   $\Z[\delta^{\pm 1}]$ as a subring.
 Then the algebra $\Br(R,\ddF_4)=\Br(\ddF_4)\otimes_{\Z[\delta^{\pm 1}]} R$ is a cellularly stratified algebra over $R$.
 \end{thm}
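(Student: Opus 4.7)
The plan is to transport the iterated inflation structure of $\Br(\ddE_6)$ (in the spirit of \cite{BO2011}) across the isomorphism $\phi:\Br(\ddF_4)\to\SBr(\ddE_6)$ of Theorem \ref{mainthm}. The stratification will carry four layers, indexed by $Y\in\mathcal{Y}=\{\emptyset,\{2\},\{1,6\},\{2,3,5\}\}$, matching the four $W(\ddE_6)$-orbits of admissible sets. For each $Y$, I would let $I_Y\subseteq\Br(R,\ddF_4)$ be the $R$-span of those elements whose image under $\phi$ is a $\sigma$-invariant monomial with $\phi(a)\emptyset\in W(\ddE_6)B_{Y'}$ for some $Y'$ at or below $Y$ in the Hasse order of \cite{CGW2006}. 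Since this order is weakly preserved by the Brauer monoid action, each $I_Y$ is a two-sided ideal, and any linear extension of $\mathcal{Y}$ yields a cell chain $0=J_0\subset J_1\subset\cdots\subset J_4=\Br(R,\ddF_4)$.

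The heart of the argument is to identify each quotient $J_l/J_{l-1}$, as an $R$-module, with $V_Y\otimes_R V_Y\otimes_R B_Y$, where $V_Y$ is the free $R$-module on the $\sigma$-invariant admissible sets in $W(\ddE_6)B_Y$ (of cardinalities $1$, $12$, $30$, $39$, by the table in Corollary \ref{lowerrank}) and $B_Y:=R[W(M_Y)^\sigma]$, equal to $R[W(\ddF_4)]$, $R[W(\ddB_3)]$, $R[W(\ddA_2)]$, or $R$ respectively. The isomorphism sends the normal form $\delta^i a_B\,\hat{e}_Y\,h\,a_{B'}^{\rm op}$ of Corollary \ref{lm:sigmainvariant} to $B\otimes B'\otimes\delta^i h$; Lemma \ref{surjective} together with the dimension count of Corollary \ref{upperrank} guarantees that every $\sigma$-invariant monomial is represented exactly once, so that the asymmetric forms (II)--(VI) of Theorem \ref{rewrittenforms} are correctly absorbed into this four-layer picture, as confirmed by the identity $1+12^2\cdot 48+30^2\cdot 6+39^2=14985$. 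The inflation bilinear form $\varphi_Y:V_Y\otimes V_Y\to B_Y$ is then read off from the middle product $a_{B'}^{\rm op}\,a_C$ through the Brauer monoid action on admissible sets: if this product drops to an orbit deeper than $W(\ddE_6)B_Y$ it lies in $J_{l-1}$, and otherwise it contributes a power of $\delta$ and an element of $W(M_Y)^\sigma$. The anti-involution of Proposition \ref{prop:opp} swaps the two tensor factors and restricts to the canonical involution on each $B_Y$, verifying $i(\varphi_Y(v,w))=\varphi_Y(w,v)$. Cellularity of the $B_Y$ is then automatic: the group orders $1152$, $48$, $6$, $1$ are all of the form $2^a 3^b$, so the hypothesis that $2$ and $3$ are invertible in $R$ makes each $B_Y$ semisimple and hence cellular by \cite{GL1996}.

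For the idempotent data of Definition \ref{defn:cellstrat}, one follows the construction in \cite{BO2011}: for each $Y\in\mathcal{Y}$ produce an element of the required shape $u_Y\otimes v_Y\otimes 1_{B_Y}$ from a suitable scalar normalization of $\phi^{-1}(\prod_{i\in Y}E_i)$, and verify the absorption relation $e_l e_m=e_m=e_m e_l$ for $l>m$ using only the commutation rules (\ref{0.1.9}) and the idempotency rules (\ref{0.1.5})--(\ref{0.1.6}). The principal obstacle is the identification step: a single $W(\ddE_6)$-orbit of admissible sets can split into several $W(\ddF_4)$-orbits under $\fp$ (compare Proposition \ref{prop: admset} with the table of Corollary \ref{lowerrank}), and the asymmetric forms (II)--(VI) of Theorem \ref{rewrittenforms} allow the left marked set $B$ and the right marked set $B'$ of a single monomial to lie in distinct $W(\ddF_4)$-orbits inside the same $W(\ddE_6)$-orbit, as remarked before Lemma \ref{68}. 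Indexing $V_Y$ by the $\sigma$-invariant admissible sets themselves, rather than by $W(\ddF_4)$-orbit representatives, is precisely what absorbs these cross contributions; the invertibility hypothesis on $2$ and $3$ enters exclusively to guarantee semisimplicity of each $B_Y$.
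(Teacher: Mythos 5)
Your proposal follows essentially the same route as the paper: reduce to $\SBr(\ddE_6)\otimes R$ via Theorem \ref{mainthm}, realize it as a four-layer iterated inflation indexed by the $\sigma$-invariant admissible-set orbits with layer algebras the group algebras of $W(M_Y)^\sigma$ and $V_Y$ spanned by the $\sigma$-invariant sets in each orbit, define $\varphi_Y$ from the middle product, and take idempotents normalized from the $E_{Z_i}$. The only real divergence is that the paper gets cellularity of the layer group algebras from Geck's theorem on Hecke algebras, whereas you invoke semisimplicity via Maschke; that works here because Weyl group algebras are split over $\Q$, but ``semisimple hence cellular'' is not a general implication and should be stated as split semisimplicity.
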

 \begin{proof} To prove this, it suffices to prove it for  $\Br(R,\ddE_6)=\SBr(\ddE_6)\otimes_{\Z[\delta^{\pm 1}]} R$ because of
 Theorem \ref{mainthm}.
 Let $Z_0\subset Z_1\subset Z_2\subset Z_3$ be a $\sigma$-invariant and  admissible root set sequence of type $\ddE_4$, where
  $Z_0=\emptyset$, $Z_1=\{\alpha_2\}$, $Z_2=\{\alpha_2, \alpha_2+\alpha_3+\alpha_5+2\alpha_4\}$,
  $Z_3=\{\alpha_2,\alpha_3,\alpha_5, \alpha_2+\alpha_3+\alpha_5+2\alpha_4\}$.
  As $E_2E_4E_5E_3\{\alpha_1, \alpha_6\}=Z_2$, we have $\rm ht$$(Z_2)=0$. \\
 For $0\leq i\leq 3$, let $B_{Z_i}$ be the group algebras of $W_{Z_0}=W(M_{\emptyset}^\sigma)$, $W_{Z_1}=W(M_{\{2\}}^{\sigma})$,
 $W_{Z_2}=E_2E_4E_5E_3W(M_{\{1,6\}}^{\sigma})E_3E_5E_4E_2$, $W_{Z_3}=W(M_{\{2,3,5\}}^{\sigma})$ over $R$, respectively, whose group rings over $R$
 are cellular algebras due to \cite[Theorem 1.1]{G2007}.\\
Then  each monomial  $a$ in $\BrM(\ddE_6)^\sigma$ can be
uniquely written as $\delta^{i} a_{Z_i, B} \hat{e}_Y h a_{Z_i, B'}^{\rm op}$ for some $i\in\{0,1,2,3\}$ and $h$ is from the above four groups,
where $B=a\emptyset$, $B^{'}=\emptyset a $ being $\sigma$-invariant, $a_{Z_i,B}\in \BrM(\ddE_6)^{\sigma}$, $a_{Z_i,B'}^{\rm op}\in \BrM(\ddE_6)^{\sigma}$
and
\begin{enumerate}[(i)]
\item $a\emptyset=a_{Z_i,B}\emptyset=a_{B}Z_i$,  $\emptyset a= \emptyset a_{Z_i, B'}^{\rm op}= Z_i a_{B'}^{\rm op}$,
\item $\rm{ht}$$(B)=$\rm{ht}$(a_{Z_i,B})$, $\rm{ht}$$(B')=$\rm{ht}$(a_{Z_i, B'}^{\rm op})$. 
\end{enumerate}
For each $Z_i$,   let $V_{Z_i}$ be a linear space over $R$ with basis $u_{Z_i, B}$ where $B\in W(\ddE_6)Z_i$ and $\sigma(B)=B$.  and let $\varphi_{Z_i}$
be a bilinear map defined as
\begin{eqnarray*}
V_{Z_i}\otimes_{R}V_{Z_i}&\longrightarrow& B_{Z_i}\\
\varphi_{Z_i}(u_{Z_i, B},u_{Z_i, B'})&=&a_{Z_i, B}^{\rm op}a_{Z_i, B'}, \text{  }\, if\, \text{  }  Z_i=a_{Z_i, B}^{\rm op}a_{Z_i, B'}\emptyset,\\
\varphi_{Z_i}(u_{Z_i, B},u_{Z_i, B'})&=&0,  \text{  }\,if\text{  }\, Z_i\subsetneqq  a_{Z_i, B}^{\rm op}a_{Z_i, B'}\emptyset.
\end{eqnarray*}
We first prove that $\varphi$ is well defined.
As $a_{Z_i, B}^{\rm op}\empty=Z_i$, we find $Z_i\subset a_{Z_i, B}^{\rm op}a_{Z_i, B'}\emptyset$, similarly
$Z_i\subset \emptyset a_{Z_i, B}^{\rm op}a_{Z_i, B'}$. If $Z_i=a_{Z_i, B}^{\rm op}a_{Z_i, B'}\emptyset$, this indicates that
$Z_i=\emptyset a_{Z_i, B}^{\rm op}a_{Z_i, B'}$ and that $a_{Z_i, B}^{\rm op}a_{Z_i, B'}$ will be in $W_{Z_i}$ up to some power of
$\delta$. Therefore our $\varphi_{Z_i}$ is well defined. Observe that
$$(a_{Z_i, B}^{\rm op}a_{Z_i, B'})^{\rm op}=a_{Z_i, B'}^{\rm op}(a_{Z_i, B}^{\rm op})^{\rm op}=a_{Z_i, B'}^{\rm op}a_{Z_i, B},$$
so $(\varphi_{Z_i}(u_{Z_i, B},u_{Z_i, B'}))^{\rm op}=\varphi_{Z_i}(u_{Z_i, B'},u_{Z_i, B})$. By linearly extension, we find
$(\varphi_{Z_i}(u,v))^{\rm op}=\varphi_{Z_i}(v,u)$, for $u$,$v\in V_{Z_i}$.
By the proof of Lemma \ref{surjective},  the algebra $\SBr(R,\ddE_6)\otimes_{Z[\delta^{\pm 1}]}$ is an iterated inflation of cellular algebra $B_{Z_i}$ along vector space
$V_{Z_i}$ for $Z_0$, $\ldots$, $Z_3$, namely $\SBr(R,\ddE_6)\otimes_{Z[\delta^{\pm 1}]}$ satisfies (1) of cellulary stratified algebra.
We take $e_{Z_i}=u_{Z_i, Z_i}\otimes u_{Z_i, Z_i}\otimes 1_{B_{Z_i}} $, where  $1_{B_{Z_i}}=\delta^{-\#Z_i} E_{Z_i}$. Because $E_{Z_i}E_{Z_j}=\delta^{\#Z_i}E_{Z_j}$ for $Z_i\subset Z_j$,  hence the condition (2) and (3) follows since that
that $Z_i>Z_j$ means $Z_i\subsetneqq Z_j$. Finally, $\SBr(R,\ddE_6)\otimes_{Z[\delta^{\pm 1}]}$ is a cellularly stratified algebra.
 \end{proof}

Shoumin Liu \\
Eindhoven University of Technology\\
liushoumin2003@gmail.com 

\end{document}